\theoremstyle{plain}  % default
\newtheorem{theorem}{Theorem}[section]
\newtheorem*{theorem*}{Theorem}
\newtheorem{proposition}[theorem]{Proposition}
\theoremstyle{remark}
\newtheorem{remark}[theorem]{Remark}
\newtheorem{remarks}[theorem]{Remarks}
\newtheorem*{claim*}{Claim}
\numberwithin{equation}{section}
\renewcommand{\leq}{\leqslant}
\renewcommand{\geq}{\geqslant}
\renewcommand{\setminus}{\smallsetminus}
\newcommand{\R}{\mathbb{R}}
\newcommand{\Z}{\mathbb{Z}}
\newcommand{\C}{\mathbb{C}}
\newcommand{\cE}{\mathcal{E}}
\newcommand{\cF}{\mathcal{F}}
\newcommand{\calH}{\mathcal{H}}
\newcommand{\cO}{\mathcal{O}}
\newcommand{\calR}{\mathcal{R}}
\newcommand{\eL}{{\mathscr{L}}}
\newcommand{\dbar}{\bar{\partial}}
\newcommand{\lra}{\longrightarrow}
\newcommand{\PSL}{\mathrm{PSL}}
\newcommand{\SU}{\mathrm{SU}}
\newcommand{\U}{\mathrm{U}}
\newcommand{\SL}{\mathrm{SL}}
\newcommand{\SO}{\mathrm{SO}}
\DeclareMathOperator{\ad}{ad}
\DeclareMathOperator{\rank}{rank}
\DeclareMathOperator{\Hom}{Hom}
\DeclareMathOperator{\Id}{Id}
\DeclareMathOperator{\Spec}{Spec}
\DeclareMathOperator{\Aut}{Aut}
\DeclareMathOperator{\Int}{Int}
\DeclareMathOperator{\Out}{Out}
\DeclareMathOperator{\Nm}{Nm}
\newcommand{\IP}{\mbox{$I\!\!\! P$}}
\newcommand{\Pic}{\operatorname{Pic}}
\renewcommand{\phi}{\varphi}
\newcommand{\liesu}{\mathfrak{su}}
\newcommand{\liesl}{\mathfrak{sl}}
\begin{document}
\dedicatory{To Nigel Hitchin on the occassion of his 70th birthday}
%%%%%%%%%%%%%%%%%%%%%%%%%%%%%%%%%%%%%%%%%%%%%%%%%%%%%%%%%%
\title[Involutions of rank 2  Higgs bundle moduli spaces]
{Involutions of rank 2  Higgs bundle moduli spaces}
%%%%%%%%%%%%%%%%%%%%%%%%%%%%%%%%%%%%%%%%%%%%%%%%%%%%%%%%%%

 \author[Oscar Garc{\'\i}a-Prada]{Oscar Garc{\'\i}a-Prada}
\address{Instituto de Ciencias Matem\'aticas \\
  CSIC \\ Nicol\'as Cabrera, 13--15 \\ 28049 Madrid \\ Spain}
\email{oscar.garcia-prada@icmat.es}

\author[S. Ramanan]{S. Ramanan}
\address{Chennai Mathematical Institute\\
H1, SIPCOT IT Park, Siruseri\\
Kelambakkam 603103\\
India}
\email{sramanan@cmi.ac.in}

\thanks{
  %%%%%%%%%%%%%%%%%%%%%%%%%%%%%%%%%%%%%%%%%%%%%%%%%%%%%%%%%%%%%%%%%%
  Partially supported by the Europena Commission Marie Curie IRSES
MODULI Programme PIRSES-GA-2013-61-25-34.
  %%%%%%%%%%%%%%%%%%%%%%%%%%%%%%%%%%%%%%%%%%%%%%%%%%%%%%%%%%%%%%%%%%
}

\subjclass[2000]{Primary 14H60; Secondary 57R57, 58D29}

\begin{abstract}
We consider the moduli space $\calH(2,\delta)$ of 
rank 2 Higgs bundles with fixed determinant $\delta$ over a 
smooth projective curve  $X$ of genus 2 over $\C$, and 
study involutions defined
by tensoring the vector bundle with an element $\alpha$ 
of order 2 in the Jacobian of the curve,
combined with multiplication of the Higgs field by $\pm 1$. 
We describe the fixed points of these involutions  
in terms of the Prym variety  of the covering of $X$ defined
by  $\alpha$, and give an interpretation in terms of the 
moduli space of representations of the fundamental group.
\end{abstract}

\maketitle

%%%%%%%%%%%%%%%%%%%%%%%
\section{Introduction}
%%%%%%%%%%%%%%%%%%%%%%%

Let $X$ be a smooth projective curve of genus $g\geq 2$ over $\C $. A {\it Higgs bundle} 
$(E, \varphi )$ on $X$ consists of a vector bundle $E$ and a twisted endomorphism 
$\varphi :E \to E\otimes K$, where $K$ is the canonical bundle of $X$. 
The {\it slope} of $E$ is 
the rational number defined as 
$$\mu (E) = {\deg E}/{\rank E}. $$
A Higgs bundle is said to be {\it stable} (resp. {\it semistable}) if
$$\mu (F) <  ({\rm resp. } \leq )~ \mu (E)$$
for every proper subbundle $F$ of $E$ invariant under $\varphi $ in the sense that 
$\varphi (F) \subset F\otimes K$. Also, a Higgs bundle $(E, \varphi )$ is {\it polystable} if 
$(E, \varphi ) = \oplus_i (E_i, \varphi _i)$ where all the $(E_i, \varphi _i)$ are stable and 
all $E_i$ have the same slope as that of $E$. 

Let $\delta $ be a line bundle on $X$. We are interested in the moduli space $\calH(n,\delta )$ of 
isomorphism classes of polystable Higgs bundles $(E, \varphi )$ of rank $n$ with determinant $\delta $ and 
traceless $\varphi $. This moduli space was constructed analytically by Hitchin \cite{hitchin} and later 
algebraically via geometric invariant theory by Nitsure \cite{nitsure}. This space is a normal 
quasi-projective variety of dimension $2(n^2-1)(g-1)$. If the degree of
${\delta }$ and $n$ are coprime, 
$\calH(n,\delta )$ is smooth. 

Let $M(n,\delta )$ be the moduli space of polystable vector bundles of rank $n$ and determinant 
$\delta $. The set of points  corresponding to stable bundles form a smooth open set and the 
cotangent bundle of it is a smooth, open, dense subvariety of $\calH(n,\delta)$. 

In this paper, we focus on vector bundles and Higgs bundles of rank $2$, leaving the study of 
those of of higher rank (and indeed of $G$-principal bundles with $G$ reductive) for  \cite{garcia-prada-ramanan}. There are two kinds of involutions that we consider. Firstly the subgroup 
$J_2$ of elements of the Jacobian $J$ consisting of elements of order $2$ acts on $\calH(2,\delta )$ 
by tensor product. We also consider the involutions where in addition, the sign of the Higgs 
field is changed. More explicitly, for $\alpha\in J_2$ we consider 
the involutions

\begin{equation}\label{involutions}
   \begin{aligned}
\iota(\alpha)^\pm: \calH(2,\delta) & \to \calH(2,\delta) \\
(E,\varphi) & \mapsto (E\otimes\alpha,\pm \varphi).
  \end{aligned}
\end{equation}

We determine the fixed point varieties in all these cases, and their
corresponding subvarieties of the moduli space of representations of
the fundamental group of $X$ (and its universal central extension) 
under the correspondence between this moduli
space and the moduli space of Higgs bundles,
established by Hitchin \cite{hitchin} and Donaldson \cite{donaldson}.
The case of the involution $(E,\varphi)\mapsto (E,-\varphi)$ is already 
covered in the beautiful paper of Hitchin \cite{hitchin}.
 
 %%%%%%%%%%%%%%%%%%%%%%%%%
\section{Line bundles}
%%%%%%%%%%%%%%%%%%%%%%%%%

To start with, we consider involutions in the case of line bundles. The moduli space of line bundles of degree $d$ is 
the {\it Jacobian variety} $J^d$. There is a universal line bundle (called a Poincar{\'e} 
bundle) on $J^d\times X$ which is unique up to tensoring by a line bundle pulled back from $J^d$. 
We will denote $J^0$ simply by $J$.

The involution $\iota :L \to L^{-1}$ of $J$ has obviously the finite set $J_2$ of elements of order 
$2$, as its fixed point variety. 

The Higgs moduli space of line bundles consists of pairs $(L, \varphi )$ where $L$ is a line bundle
of fixed degree and $\varphi $ is a section of $K$. The moduli space of rank 1
Higgs bundles of degree $d$ is thus isomorphic to $J^d\times H^0(X,K)$.
There are a few involutions to consider even in 
this case. Firstly on the Higgs moduli space of line bundles of degree $d$, one may consider the 
involution $(L, \varphi ) \to (L, -\varphi )$. The fixed point variety is just $J^d$ imbedded in the 
Higgs moduli space by the map $L \mapsto (L, 0)$ since any automorphism of $L$ induces identity on the set 
of Higgs fields on $L$. 

When $d = 0$, one may also consider the involution $(L, \varphi ) \mapsto (L^{-1}, \varphi )$. This 
has as fixed points the set $\{ (L, \varphi ): L \in J_2 \;\;\mbox{and}\;\;
\varphi\in H^0(X,K)\} $. Also, we may consider the composite 
of the two actions, namely $(L, \varphi ) \mapsto (L^{-1}, -\varphi )$. Again it is obvious that the 
fixed points are just points of $J_2$ with Higgs fields $0$.   

Finally, translations by elements of $J_2\setminus \{ 0 \}$ are involutions without fixed points. 

%%%%%%%%%%%%%%%%%%%%%%%%%%%%%%%%%%%%%%%%%%%%%
\section{Fixed Points of $\iota(\alpha)^{-}$}\label{triples}
%%%%%%%%%%%%%%%%%%%%%%%%%%%%%%%%%%%%%%%%%%%%%

We wish now to look at involutions of $M = M(2, \delta )$ and $\calH = \calH(2, \delta )$. We will often 
assume that $\delta $ is either ${\cO}$ or a line bundle of degree $1$. There is no loss of 
generality, since the varieties $M$ and $\calH$ for any $\delta $ are isomorphic 
(on tensoring with a suitable line bundle) to ones with $\delta $ as above. In general, we denote 
by $d$ the degree of $\delta $.

If $d$ is odd, the spaces $M$ and $\calH$ are smooth and the points correspond
to stable bundles  and stable Higgs bundles, respectively. If $d$ is even (and $\delta $ trivial), 
there is a natural morphism  $J \to M$ which takes $L$ to $L\oplus L^{-1}$ and imbeds the quotient of $J$ by the involution 
$\iota$ on $J$, namely the Kummer variety, in $M$. This is the non-stable locus (which is also the 
singular locus if $g > 2$) of $M$ and has $J_2$ as its own singular locus. 

\begin{remark}
If $(E, \varphi )\in \calH$, but $E$ is not semi-stable, then there is a line sub-bundle $L$ 
of $E$ which is of degree $>d/2$. Moreover, it is the unique sub-bundle with degree $\geq d/2$. 
Clearly, since $(E, \varphi )$ is semi-stable, $\varphi $ does not leave $L$ invariant. Hence 
$(E, \varphi )$ is actually a stable Higgs bundle. In particular, it is a
smooth point of $\calH$.
\end{remark} 

%%%%%  SR
%\begin{question}
%How about the singular locus and the non-stable locus in $\calH$, specially in genus 2?
%\end{question}
%%%%%   SR

Before we take up the study of the involutions (\ref{involutions}) in general,
we note that even  when $\alpha$ is trivial, the 
involution $\iota^-:=\iota(\cO)^{-}$ is non-trivial and is of interest. In this case, the fixed point 
varieties were determined by Hitchin \cite{hitchin} and we recall the results
with some additions and  clarifications.

\begin{proposition}
Polystable Higgs bundles $(E, \varphi )$ fixed by the involution 
$\iota^{-}:(E, \varphi ) \mapsto (E, -\varphi )$ fall under the following types:

\begin{itemize}

\item[(i)] $E\in M = M(2, \delta )$ and $\varphi = 0$.

\item[(ii)] For every integer $a$ satisfying $ 0 <  2a - d \leq 2g - 2$, consider the set $T_a$
of triples 
$(L, \beta,\gamma )$ consisting of a line bundle $L$ of degree $a$ and
  homomorphisms 
$\beta : L^{-1}\otimes \delta \to L\otimes K$, with $\gamma \neq 0$  
and 
$\gamma : L\to L^{-1}\otimes
  \delta \otimes K$.

\item[(iii)] Same as in ii), but with $2a = d$ if $d$ is even.

To every triple as in ii) or iii),  associate the Higgs bundle $(E, \varphi )$ where 

\begin{equation}\label{higgs-bundle}
E = L\oplus (L^{-1}\otimes \delta ) \;\;\;\; \;\; \mbox{and}\;\;\;\;\;\; 
\varphi =
\begin{pmatrix}
  0 & \beta \\
  \gamma & 0
\end{pmatrix}.
\end{equation}

\end{itemize}

Any type {\em (ii)} Higgs bundle $(E, \varphi)$ is stable whereas $E$ is not even semi-stable. 
 
In type {\em (iii)} if $L^2$ is not isomorphic to $\delta $, and 
$\beta $ and $\gamma $ are both non-zero, then
$(E, \varphi )$ is stable. If $L^2\cong \delta$ and 
$\beta $ and $\gamma $ (both of which are then sections of $K$) are linearly independent,
then $(E, \varphi )$ is stable.
\end{proposition}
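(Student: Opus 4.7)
The plan is to extract the structural form of an $\iota^-$-fixed polystable Higgs bundle from an intertwining automorphism of $E$, and then verify the two stability assertions separately. First, I translate the fixed-point condition $(E,\varphi)\cong(E,-\varphi)$ into the existence of $\psi\in\Aut(E)$ satisfying $(\psi\otimes\Id_K)\circ\varphi=-\varphi\circ\psi$; the case $\varphi=0$ gives exactly type (i). For $\varphi\neq 0$ one cannot have $\psi=\pm\Id$, so $\psi$ is nontrivial. If $(E,\varphi)$ is stable, its Higgs endomorphism algebra is $\C$, so $\psi^2$ is a scalar and can be rescaled to $\Id$. If $(E,\varphi)=(L_1,\varphi_1)\oplus(L_2,\varphi_2)$ is polystable but not stable, Krull--Schmidt forces the isomorphism $(E,\varphi)\cong(E,-\varphi)$ to permute the summands (the identity matching would give $\varphi_i=-\varphi_i=0$), yielding $L_1=L_2=:L$ with $L^2=\delta$ and $\varphi_2=-\varphi_1$; the change of basis $e_1\pm e_2$ brings $\varphi$ into off-diagonal form and $\psi$ to an involution.

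With $\psi^2=\Id$ in hand I decompose $E=L\oplus M$ into $(\pm 1)$-eigenbundles; the anti-commutation forces $\varphi$ into the shape of \eqref{higgs-bundle} with $LM=\delta$. Labelling so that $a=\deg L\geq d/2$ and (when the choice is free) so that $\gamma\neq 0$: if $a>d/2$ stability forces $L$ to be non-invariant, so $\gamma\neq 0$, and the nonvanishing of $\gamma\colon L\to L^{-1}\otimes\delta\otimes K$ then gives $2a-d\leq 2g-2$, which is type (ii); the case $a=d/2$ is type (iii).

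To verify stability of a type (ii) triple I would observe that any line subbundle $N\subset E=L\oplus(L^{-1}\otimes\delta)$ of degree $\geq d/2$ must coincide with $L$: otherwise the projection $N\to L^{-1}\otimes\delta$ is nonzero and forces $\deg N\leq d-a<d/2$. Since $\gamma\neq 0$, $L$ itself is not $\varphi$-invariant, so $(E,\varphi)$ is stable; meanwhile $\deg L=a>d/2$ shows that $E$ is not even semistable.

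For type (iii) the subtle point is the family of degree-$d/2$ sub-line bundles of $E$. If $L^2\not\cong\delta$ then $L\not\cong M$ and the only candidates are $L$ and $M$; if both $\beta$ and $\gamma$ are nonzero, neither is $\varphi$-invariant and $(E,\varphi)$ is stable. If $L^2\cong\delta$ so that $E\cong L\oplus L$ and $\beta,\gamma\in H^0(K)$, the degree-$d/2$ sub-line bundles form a $\PP^1$ parametrised by $(s:t)$, and the inclusion $x\mapsto(sx,tx)$ is $\varphi$-invariant precisely when $s^2\gamma=t^2\beta$ as sections of $K$; linear independence of $\beta,\gamma$ then forces $s=t=0$, ruling out any $\varphi$-invariant degree-$d/2$ subbundle. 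I expect the main obstacle to lie in the polystable-not-stable branch of the classification step: one must recognise the direct-sum description of such a fixed point and match it, via the change of basis above, with the off-diagonal normal form, thereby placing it on the same footing as the stable fixed points and identifying it with the linearly dependent configurations $\beta=\gamma$ in the $L^2\cong\delta$ case of type (iii).
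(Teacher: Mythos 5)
Your argument is correct and follows essentially the same route as the paper: an intertwining automorphism whose square is (after normalisation) central yields the eigenbundle decomposition $E=L\oplus M$ and the off-diagonal form of $\varphi$, the bound $2a-d\leq 2g-2$ comes from the nonvanishing of $\gamma$, and stability is checked against the unique (resp.\ $\PP^1$-family of) maximal-degree sub-line bundles, with the polystable non-stable fixed points recast as the $\beta=\gamma$ configurations of type (iii) via the basis change $e_1\pm e_2$. The only noteworthy divergences are cosmetic: you rescale so that $\psi^2=\Id$ where the paper fixes $\det f=1$ and deduces $f^2=-\Id$ with eigenvalues $\pm i$, and your invariance condition $s^2\gamma=t^2\beta$ for the embedding $x\mapsto(sx,tx)$ is the correct quadratic form of the condition the paper records as $\lambda\gamma+\mu\beta=0$; both give the same conclusion under linear independence of $\beta$ and $\gamma$.
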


\begin{proof}

Firstly, if $E\in M$ and $\varphi = 0$, it is obvious that it is fixed under the above 
involution. On the other hand, it is clear that if $(E, \varphi )$ is of type (ii) or (iii), then 
the automorphism of $E$ 

\begin{equation}\label{i-matrix}
\begin{pmatrix}
  i & 0 \\
  0  & -i
\end{pmatrix},
\end{equation}

\noindent
takes $\varphi $ to $-\varphi $. In type (ii),  since $2a - d > 0$, it follows that $L$ is the only line
sub-bundle of $E$ of degree $\geq d/2$. Since $(E, \varphi )$ is semi-stable, $L$ is not invariant 
under $\varphi $,  (which is the case if and only if $\gamma $ is non-zero). Therefore,  $(E, \varphi )$ is 
stable. 

Type (iii) is relevant only when $d$ is even and so we will assume that $\delta $ is trivial. 
If $L^2$ is not trivial, then every line subbundle 
of $E$ of degree $0$ is either $L$ or $L^{-1}$. Since 
we have assumed that $(E, \varphi )$ is poly-stable, either $\varphi $ leaves both 
$L$ and $L^{-1}$ invariant or neither, i.e. $\beta $ and $\gamma $ are both zero or 
both non-zero. The former case is covered under type i) and in the latter case, $(E, \varphi )$ is stable. 
Finally, if $L^2$ is trivial, then every line sub-bundle of degree $0$ is isomorphic 
to $L$, and all imbeddings of $L$ in 
$E = L \oplus L$ are given by $v \mapsto (\lambda v , \mu v)$, with $(\lambda , \mu ) \neq 0$. The 
restriction of $\varphi $ to $L$ composed with the projection of $E\otimes K$ to 
$(E/L) \otimes K = (L\otimes K)$, is given by $\lambda \gamma + \mu \beta $. Hence this imbedding of 
$L$ is invariant under $\varphi $ if and only if $\lambda \gamma + \mu \beta = 0$, proving that if 
$\beta $ and $\gamma $ are linearly independent, then $(E, \varphi )$ is stable.  Otherwise, $(L,0)$
is a (Higgs) subbundle of $(E, \varphi )$ and hence it is covered again in i).

Conversely, let $(E, \varphi )$ be a {\it stable} Higgs bundle fixed by the involution. Then there exists 
an automorphism $f$ of $E$ (of determinant 1) which takes $\varphi $ to $-\varphi $. If $E$ is a stable 
vector bundle, all its automorphisms are scalar multiplications which take $\varphi $ into itself. Hence 
$\varphi = 0$ in this case. Let $E$ be nonstable. Obviously, then $\varphi $ is non-zero. Since $f^2$ 
is an automorphism of 
the stable Higgs bundle $(E, \varphi )$, we have $f^2 = \pm \Id_E$. This implies that $f_x$ is semi-simple 
for all $x \in X$. If $f^2 = \Id_E$, the eigenvalues of $f_x$ are $\pm 1$ and since $\det(f_x) = 1 $  we 
have $f = \pm \Id_E$ which would actually leave $\varphi $ invariant. So $f_x$ has $\pm i$ as eigenvalues 
at all points. We conclude that $E$ is a direct sum of line bundles corresponding to the eigenvalues 
$\pm i$. Thus $f^2 =-\Id_E$ and $E = L \oplus (L^{-1}\otimes \delta )$ with $f|L = i.\Id_E$, and 
$f|(L^{-1}\otimes \delta ) = -i.\Id_E$. We may assume that $\deg L = a \geq d/2$, replacing $L$ by 
$L^{-1}\otimes \delta $ (and $f$ by $-f$) if necessary. If $ a > d/2$, it also follows that the composite
of $\varphi |L $ and the projection $E\otimes K \to L^{-1}\otimes \delta \otimes K$ is nonzero (since
$(E, \varphi )$ is semi-stable) which implies that $a \leq -a + d +2g -2$ , i.e . $2a - d \leq 2g - 2$.  
Moreover, from the fact that $f$ takes $\varphi $ to $-\varphi $, one deduces that $\varphi $ is of the form 
claimed. 

If $(E, \varphi )$ is not stable, in which case we may assume $\delta $ is trivial, $(E, \varphi )$ is a direct 
sum of $(L, \psi )$ and $(L^{-1}, -\psi )$ with $\deg L = 0$. If $\psi $ is nonzero, then $(E, \varphi )$ is 
isomorphic to $(E, -\varphi )$ if and only if $L\cong L^{-1}$. If then $L\cong L^{-1}$ we may take 
$g =1/\sqrt 2\begin{pmatrix}1&1\\-1&1\end{pmatrix}$ and change the decomposition of $E$ to 
$g(L) \oplus g(L)$ and see that $(E, \varphi )$ falls under type (iii). 
\end{proof}

%%%%%%%%%%%%%%%%%%%%%%%%%%%%%%%%%%%%%%%%%%%%%%%%%%%%%%%%%%%%%%%%%%%%%%%%%%%%%%%%%%%%%%%%%%%%%%%%%%%%%%%
\subsection{The set of triples}
%%%%%%%%%%%%%%%%%%%%%%%%%%%%%%%%

The above proposition leads us to consider the set of triples as in type (ii) and type (iii) 
above with $d \leq 2 a \leq  d + 2g - 2$. Set $m = 2a - d$. 
To such a triple, we have associated  the Higgs bundle $(E,\varphi )$ given by
$E = L \oplus (L^{-1}\otimes \delta )$ and $\varphi $ by the matrix in
(\ref{higgs-bundle}).

Notice however that this triple and the triple $(L,  \lambda ^{-1}\beta,\lambda \gamma)$ give rise to 
isomorphic Higgs bundles. So we consider the set of triples $(L, \beta , \gamma )$ as above, make $\C ^*$ 
act on it, in which $\lambda \in \C ^*$ takes $(L, \beta , \gamma )$ to 
$(L, \lambda ^{-1}\beta, \lambda \gamma)$ and pass to the quotient. We have thus given an injective 
map of this quotient into the $\iota ^{-}$-fixed subvariety of Higgs bundles.  

%Note that the type i) is obviously a closed subset. Also the subsets $T_a$ are all disjoint for 
%different $a\neq 0$ and from type i) and iii). 
We will equip this quotient with the structure of a variety. 

%%%%%%%%%%%%%%%%%%%%%%%%%%%%%%%%%%%%%%%%%%%%%%%%%%%%%%%%%
\subsection{Construction of the space of triples.}
%%%%%%%%%%%%%%%%%%%%%%%%%%%%%%%%%%%%%%%%%%%%%%%%%%%%%%%%%

Take any line  bundle ${\eL}$ on $T \times X$, where $T$ is any parameter variety. 
For any $t\in T$, denote by ${\eL}_t$ the line bundle $\eL|{{t}\times X}$. Assume that 
$\deg ({\eL}_t) = r$ for all $t\in T$. Then we get a (classifying) morphism $c_{\eL}:T\to J^r$ 
mapping $t$ to the isomorphism class of ${\eL}_t$. 

There is a natural morphism 
of $S = S^r(X) \to J^r$ since $S\times X$ has a universal divisor giving rise
to a family of line bundles on $X$ of degree $r$, parametrised by $S$.
The pull back of any Poincar{\'e} bundle on $J^r\times X$ to $S\times X$ is the tensor 
product of the line bundle given by the universal divisor on $S \times X$ and 
a line bundle pulled back from $S$. The composite of the projection of this
line bundle $U$ to $S$ and the morphism $S\to J$ blows down the zero section of
the line bundle to $Z$ and yields  actually an affine morphism 
and the fibre over any $L\in J^r$ can be identified with $H^0(X,L)$, coming up with
a section $Z$ of this affine morphism.  Notice that
if $r > 2g -2$, this is actually a vector bundle over $J^r$ of rank $r + 1 - g$ and $Z$ is its
zero section. 

If $\eL$ is a family of line bundles of degree $r$ on $X$, parametrised by $T$ as above 
the pull back of  the morphism $U \to J^r$ by $c_E:T \to J^r$ will be denoted $A({\eL})$. 

If $m >0$, let $V$ be the pull back by the map $J^a \to J^{2g - 2  +  m}$ given by
$L\to K\otimes L^2  \otimes \delta ^{-1}$ of the above vector bundle.
On the other hand the map $L \to K \otimes L^{-2} \otimes \delta $ of $J^a \to J^{2g - 2 - m}$
pulls back the symmetric product $S^{2g - 2 -m}$ and gives a $2^{2g}$-sheeted \'etale covering.
The inverse image of $V$ tensored with a line bundle on $S^{2g -2  -m}$ thus gives the 
required structure on the quotient of $T_a$ by $\C ^*$. 

\begin{proposition} For each $m$ with $0 < m < 2g -2$, consider the pull-back of the map $S^{2g - 2 - m}
\to J^{2g - 2 - m}$ by the map $L \to K \otimes L^{-2}\otimes \delta $. A vector bundle over this of rank
$g-1 + m$ is isomorphic to a subvariety of Higgs bundles which are all fixed by $\iota^-$. 
\end{proposition}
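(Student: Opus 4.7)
The plan is to realise $T_a/\C^*$ as a vector bundle over the base $B := S^{2g-2-m} \times_{J^{2g-2-m}} J^a$ appearing in the statement, and then to use the triple-to-Higgs-bundle assignment of the previous proposition to identify this vector bundle with a subvariety of $\calH$ contained in the $\iota^{-}$-fixed locus. First I would construct a morphism $T_a/\C^* \to B$ by sending $(L,\beta,\gamma) \mapsto (L, \operatorname{div}\gamma)$. Since $\gamma$ is a nonzero section of $L^{-2}\otimes K\otimes\delta$, its zero divisor has degree $2g-2-m$ and satisfies $\cO(\operatorname{div}\gamma) \cong L^{-2}\otimes K\otimes\delta$, so it indeed defines a point of $B$. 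This assignment is $\C^*$-invariant, because scaling $\gamma$ does not change its divisor, so it descends to $T_a/\C^*$.

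Next I would identify the fibres. Over $(L,D) \in B$, any section of $L^{-2}\otimes K\otimes\delta$ cutting out $D$ is unique up to a scalar, and this scalar is exactly what the $\C^*$-action on triples absorbs. Hence, after a choice of representative $\gamma_D$, the fibre is identified with $H^0(X, L^2\otimes K\otimes\delta^{-1})$, which by Riemann--Roch has dimension $m + 2g - 2 + 1 - g = g - 1 + m$ (using $m + 2g - 2 > 2g-2$). To globalise, I would pull back the vector bundle $V$ from $J^a$ via the natural projection $B \to J^a$ to obtain a rank-$(g-1+m)$ vector bundle on $B$ whose fibres are exactly these $H^0$ spaces. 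A universal section of $L^{-2}\otimes K\otimes\delta$ having the universal divisor on $B\times X$ as its zero scheme exists only after tensoring with an appropriate line bundle pulled back from $S^{2g-2-m}$, reflecting the non-uniqueness of Poincar\'e bundles. Tensoring the pullback of $V$ with this line bundle yields the vector bundle claimed in the proposition.

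Finally, the universal triple over this vector bundle gives a family of stable Higgs bundles $(E,\varphi) = (L\oplus(L^{-1}\otimes\delta),\varphi)$ with $\varphi$ as in~(\ref{higgs-bundle}); by the preceding proposition they are all fixed by $\iota^{-}$, and the injectivity asserted there upgrades the classifying map to $\calH$ into a locally closed embedding onto a subvariety of the $\iota^{-}$-fixed locus. The main obstacle will be pinning down the twisting line bundle on $S^{2g-2-m}$ precisely: the fibrewise identification above is only canonical up to a scalar varying over the base, and controlling this variation so that $T_a/\C^*$ acquires the structure of a genuine vector bundle, and so that the map to $\calH$ becomes a morphism of varieties rather than just a set-theoretic injection, is the technical heart of the argument.
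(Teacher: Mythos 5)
Your proposal is correct and follows essentially the same route as the paper: you realise $T_a/\C^*$ as a vector bundle over the fibre product $S^{2g-2-m}\times_{J^{2g-2-m}}J^a$ (the $2^{2g}$-sheeted \'etale covering), with fibre $H^0(X,L^2\otimes K\otimes\delta^{-1})$ of dimension $g-1+m$ obtained by pulling back $V$ from $J^a$ and twisting by a line bundle from $S^{2g-2-m}$ to absorb the residual scalar ambiguity in $\gamma$. The identification with a subvariety of the $\iota^-$-fixed locus via the preceding proposition is also the paper's argument.
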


We have seen that $M$ imbedded in $H$ by $E$ to $(E, 0)$ is a fixed point variety. It is of course closed
and in fact, compact as well. 

The set of type (ii) fixed points is the disjoint union of $T_a$ with $d/2 < a < g - 1 + d$ and (disjoint 
from $M$ as well). Each of these 
gives an injective morphism of a vector bundle on a $2^{2g}$-sheeted \'etale covering of $S^a$ into the fixed point
subvariety.  Since the subvariety of $H$ corresponding to nonstable vector bundles is  smooth and closed, 
this morphism is an isomorphism onto the image. 

We need to describe the image of the subvariety $T_a$ when $a = d/2$. We will assume $d =0$ and 
$\delta $ is trivial. Consider the  natural map of 
$S^{2g -2}$ onto $J^{2g -2}$. Pull it back to $J$ by the two maps $L \to
K\otimes L^2$ and $L\to K\otimes L^{-2}$. Take their fibre product and the quotient by the involution which
changes the two factors.  There is a natural map of this quotient into 
${\IP}H^0(K^2)$.
Pull back the line bundle $\cO(1)$ on ${\IP}H^0 (K^2)$ to this . It is easy to check that this is 
irreducible and closed. 

There are other irreducible components of type (iii)  in the case of $g = 2$.
Take any line bundle $L$ of order 2 and  consider 
\begin{equation}
\begin{pmatrix}
  0 & \beta \\
  \gamma & 0
\end{pmatrix}
\end{equation}
as a Higgs field on $L \oplus L$. Consider the tensor product map $\beta \otimes \gamma$ 
into $H^0 (K^2)$. This is surjective and can be identified with the quotient by $\C ^*$ and 
$\Z/2$ of the fixed point set given by $(L, \beta , \gamma)$  with $L\in J_2$.

\subsection{An Alternative point of view}

Note that both in Type (ii) and Type (iii) we have a natural morphism of these components into 
$H^0(X, K^2)$ given by $(\beta , \gamma )\mapsto -\beta\gamma $. Clearly this is the restriction of the Hitchin map. 
Given a (non-zero) section of $H^0 (K^2)$ we can partition its divisor into two sets of cardinality
$2g -2 -m $ and $2g -2 + m$. They yield elements of $J^{2g -2 -m}$ and $J^{2g -2 +m}$ together with 
non-zero sections $\beta $ and $\gamma $ which are defined up to the action of $\C^*$ as we have defined 
above.  Passing to a $2^2g$-sheeted \'etale covering we get the required set.
In particular it follows that except in case i) when the Hitchin map is $0$, in all other cases, the 
Hitchin map is finite and surjective.

  %%%%%%%%%%%%%%%%%%%%%%%%%%%%%%%%%%%%%%%%%%%%
\section{Prym varieties and rank 2 bundles}\label{prym}
%%%%%%%%%%%%%%%%%%%%%%%%%%%%%%%%%%%%%%%%%%%%

Let now $\alpha \in J_2\setminus \{0\}$. To start with, we will determine 
the fixed points of the involution defined on $M$ defined by tensoring by
$\alpha$

\begin{proposition}
Let $E$ be a vector bundle of rank $2$ on $X$, and let $\alpha $ be a non-trivial line bundle
of order $2$ such that $(E\otimes \alpha )\cong E$. Then $E$ is polystable. Moreover if $E$ is not stable 
it is of the form $L \oplus (L\otimes \alpha )$ with $L^2 \cong\alpha $.
\end{proposition}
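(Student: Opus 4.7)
The plan is to treat three cases separately: $E$ not semistable (which I will rule out), $E$ stable (automatically polystable), and $E$ strictly semistable (the interesting case). Throughout, let $d=\deg\delta$ and fix an isomorphism $\phi\colon E\xrightarrow{\sim} E\otimes\alpha$.

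First I would show that $E$ is forced to be semistable. If not, $E$ admits a unique line subbundle $M$ of maximal degree (necessarily $>d/2$); this uniqueness follows because a competing line subbundle of degree $>d/2$ would project nonzero to $E/M$ and thereby have degree $<d/2$, a contradiction. The maximal destabilizer of $E\otimes\alpha$ is then $M\otimes\alpha$, so $\phi$ must carry $M$ isomorphically to $M\otimes\alpha$, forcing $M\cong M\otimes\alpha$ as abstract line bundles and hence $\alpha\cong\cO$, contradicting the hypothesis. So $E$ is semistable, and if it is stable then polystability is automatic.

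For the remaining case, suppose $E$ is strictly semistable and let $\operatorname{gr}(E)=M_1\oplus M_2$ be the associated graded of a Jordan--H\"older filtration, with each $M_i$ of degree $d/2$. Since S-equivalence classes are preserved by $\otimes\alpha$, the unordered pairs $\{M_1,M_2\}$ and $\{M_1\alpha,M_2\alpha\}$ coincide. Having $M_i\cong M_i\alpha$ would again force $\alpha\cong\cO$, so instead $M_2\cong M_1\alpha$; writing $L=M_1$ gives $\operatorname{gr}(E)=L\oplus L\alpha$ with $L\not\cong L\alpha$. After relabeling if necessary, $L$ appears as the sub-line bundle of an extension $0\to L\to E\to L\alpha\to 0$.

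The main obstacle is to show this extension actually splits. I would examine the composition
\[
L\hookrightarrow E\xrightarrow{\phi} E\otimes\alpha\twoheadrightarrow L,
\]
in which the last arrow is the quotient of the $\alpha$-twisted sequence $0\to L\alpha\to E\otimes\alpha\to L\to 0$. If this composite vanished, $\phi(L)$ would lie inside $L\alpha$, producing a nonzero element of $\Hom(L,L\alpha)=H^0(\alpha)$; but $H^0(\alpha)=0$ since $\alpha$ is a nontrivial degree-zero line bundle. Hence the composite is an isomorphism, $\phi(L)$ splits the twisted sequence, and $E\otimes\alpha\cong L\oplus L\alpha$; tensoring by $\alpha$ returns $E\cong L\oplus L\alpha$, giving polystability. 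Finally, from $\det E = L^2\otimes\alpha$ and the fact that this non-stable locus is nonempty only when $\delta\cong\cO$, we read off $L^2\cong\alpha$.
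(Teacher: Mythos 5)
Your proof is correct, and it diverges from the paper's in the strictly semistable case. For the unstable case you argue exactly as the paper does: the unique maximal line subbundle must be carried by the isomorphism to its own $\alpha$-twist, forcing $L\cong L\otimes\alpha$ and hence $\alpha\cong\cO$. The paper then disposes of the remaining non-polystable possibility with the \emph{same} one-line argument, using that a non-split extension of line bundles of equal degree also has a unique line subbundle of maximal degree (any competing one would split the extension), so again $L\cong L\otimes\alpha$ gives a contradiction. You instead extract the Jordan--H\"older factors, identify them as $L$ and $L\otimes\alpha$, and prove \emph{constructively} that the extension splits, by showing the composite $L\hookrightarrow E\to E\otimes\alpha\twoheadrightarrow L$ cannot vanish since $\Hom(L,L\otimes\alpha)=H^0(\alpha)=0$. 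Your route is longer but makes explicit that the vanishing of $H^0(\alpha)$ (not merely $L\not\cong L\otimes\alpha$) is what drives the splitting, and it also actually derives the final clause $L^2\cong\alpha$ from $\det E=L^2\otimes\alpha$ together with the paper's standing normalization $\delta\cong\cO$ in even degree --- a point the paper's proof passes over in silence (your phrasing ``the non-stable locus is nonempty only when $\delta\cong\cO$'' should really say ``only when $\deg\delta$ is even, in which case $\delta$ is normalized to $\cO$,'' but this is cosmetic). Both arguments are sound; the paper's is more economical, yours is more self-contained.
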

\begin{proof}
Assume that $(E\otimes \alpha )\cong E$. If $E$ is not poly-stable, then it has a unique line subbundle $L$ of 
maximal degree. This implies that $(L\otimes \alpha )\cong L$ which is absurd. If $E$ is of the form $L \oplus M$, 
then under our assumption, it follows that $M \cong L\otimes \alpha $.
\end{proof}

We recall \cite{mumford,narasimhan-ramanan} the relation between the Prym
variety of a two-sheeted  {\'e}tale cover of $X$ and vector 
bundles of rank 2 on $X$. If $\alpha $ is a non-trivial element of $J_2(X)$, there is associated to it a canonical 
$2$-sheeted {\' e}tale cover $\pi :X_{\alpha } \to X$, namely $\Spec({\cO} \oplus \alpha )$ with the 
obvious algebra structure on this locally free sheaf. Let $\iota$ be the Galois involution. For every line bundle $L$ 
of degree $d$ on $X_\alpha$, the line bundle $L\otimes \iota^*L$ of degree $2d$ with the natural lift of $\iota$ clearly descends 
to a line bundle $\Nm(L)$ of degree $d$ on $X$. This gives the {\it norm homomorphism} $\Nm:\Pic(X_\alpha)\to \Pic(X)$. Its 
kernel consists of two components and the one which contains the trivial line bundle is the {\it Prym variety} 
$P_{\alpha }$ associated to $\alpha $. If $L$ is a line bundle on $X_\alpha$, its direct image $\pi _*(L)$ on $X$ 
is a vector bundle of rank $2$. Note that $\det(\pi _*({\cO})) = \det({\cO}\oplus \alpha) = \alpha$, and 
more generally that $\det(\pi _*(L)) = \Nm(L) \otimes \alpha $ for all $L$. The fibres of $\Nm$ consist of two 
cosets $F_\alpha$ of $P_{\alpha }$ and the Galois involution interchanges the 
two if the degree is odd and leaves each 
component invariant if the degree is even. In particular, it acts on $P_{\alpha }$, and indeed as 
$L\mapsto L^{-1}$ on it. 

\begin{proposition}
For any line bundle $L$ on $X_\alpha$, the direct image $E = \pi _*L$ 
is a polystable vector 
bundle of rank $2$ on $X$ such that $E\otimes \alpha \cong  E $. If $E$ is not stable, it is of the form 
$\xi \oplus (\xi \otimes \alpha)$.
\end{proposition}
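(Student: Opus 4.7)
The plan is to reduce everything to the preceding proposition by establishing the symmetry $E\otimes\alpha\cong E$. Since $\pi$ is a finite \'etale cover of degree $2$, the sheaf $E=\pi_*L$ is automatically locally free of rank $2$ on $X$, and its determinant is given by the standard formula $\det E=\Nm(L)\otimes\det(\pi_*\cO_{X_\alpha})=\Nm(L)\otimes\alpha$, which matches the formula already recalled in Section~\ref{prym}.

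The crux is the isomorphism $E\otimes\alpha\cong E$. By the projection formula,
$$E\otimes\alpha \;=\; \pi_*L\otimes\alpha \;=\; \pi_*(L\otimes\pi^*\alpha),$$
so it suffices to prove $\pi^*\alpha\cong\cO_{X_\alpha}$. This is the defining property of the cyclic cover $X_\alpha=\Spec(\cO\oplus\alpha)$: the inclusion of $\alpha$ as the $(-1)$-eigensheaf of $\pi_*\cO_{X_\alpha}$ under the Galois action corresponds, via the adjunction $\Hom_X(\alpha,\pi_*\cO_{X_\alpha})=\Hom_{X_\alpha}(\pi^*\alpha,\cO_{X_\alpha})$, to a nonzero morphism between two line bundles of degree $0$ on the smooth projective curve $X_\alpha$, and any such morphism is an isomorphism. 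Equivalently, the multiplication $\alpha\otimes\alpha\to\cO$ built into the algebra structure on $\cO\oplus\alpha$ is precisely what trivialises $\alpha$ upon pullback.

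With the symmetry $E\otimes\alpha\cong E$ secured, the previous proposition of this section applies directly: it forces $E$ to be polystable, and, whenever $E$ fails to be stable, it identifies $E$ with a direct sum $\xi\oplus(\xi\otimes\alpha)$ for some line bundle $\xi$ on $X$. The only substantive step in the whole argument is the identification $\pi^*\alpha\cong\cO_{X_\alpha}$; once that is in place the remainder of the statement is a formal consequence of the earlier proposition.
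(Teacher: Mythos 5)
Your argument is correct, but it proves the proposition by a genuinely different route than the paper does. The paper's own proof is a direct degree estimate: any line subbundle $\xi\subset E=\pi_*L$ corresponds under the adjunction $\Hom_X(\xi,\pi_*L)=\Hom_{X_\alpha}(\pi^*\xi,L)$ to a nonzero map $\pi^*\xi\to L$, whence $2\deg\xi\leq\deg L=\deg E$, giving semistability at once; in the equality case $\pi^*\xi\to L$ is an isomorphism and $E=\pi_*(\pi^*\xi)=\xi\otimes\pi_*\cO=\xi\oplus(\xi\otimes\alpha)$ by the projection formula. You instead establish the symmetry $E\otimes\alpha\cong E$ (via $\pi^*\alpha\cong\cO_{X_\alpha}$, which you justify correctly by adjunction and degree considerations) and then invoke the immediately preceding proposition, which derives polystability and the splitting from that symmetry alone. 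Both arguments are sound; there is no circularity since the preceding proposition is proved independently. What your route buys is a verification of the assertion $E\otimes\alpha\cong E$, which appears in the statement but is not actually checked in the paper's own proof; what the paper's direct route buys is the sharper information that $E=\pi_*L$ fails to be stable precisely when $L\cong\pi^*\xi$ is a pullback from $X$ (equivalently, the destabilizing $\xi$ satisfies $\pi^*\xi\cong L$), a fact used implicitly in the subsequent identification of the fixed-point locus with (a quotient of) the Prym variety.
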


\begin{proof}
Indeed, if $\xi $ is any line subbundle of $E$, its inclusion in $E$ gives 
rise to a nonzero homomorphism $\pi ^*\xi  \to L$,  and hence 
$2\deg \xi = \deg (\pi ^*\xi ) \leq \deg(L) = \deg(E)$, proving $E$ is
semi-stable. If $\deg \xi = \deg E/2$, 
the homomorphism $\pi ^*\xi \to L$ is an isomorphism. But then 
$\pi _*L = \pi _*(\pi ^*\xi ) = \xi \otimes \pi _*{\cO} = \xi \otimes ({\cO} \oplus \alpha )$ 
proving our assertion.
\end{proof}

We have thus a morphism of $\Nm^{-1}(\delta \otimes \alpha )$ into $M(2, \delta )$ which maps $L$ to $\pi _*L$. 
Let $E$ be stable such that $E\otimes \alpha \cong E$. we may then choose an isomorphism 
$f:E \to E\otimes {\alpha }$ such that its iterate $(f\otimes \Id_{\alpha})\circ f: E\to E$ is the identity. 
Indeed this composite is an automorphism of $E$ and hence a non-zero scalar. We can then replace the isomorphism 
by a scalar multiple so that this composite is $\Id_E$. Now the locally free sheaf ${\cE}$ can be provided a 
module structure over ${\cO}\oplus \alpha $ by using the above isomorphism. This means that it is the direct 
image of an invertible sheaf on $X_\alpha$. On the other hand, if $E$ is poly-stable but not stable, it is isomorphic 
to $L\oplus M$. If $E\otimes \alpha $ is isomorphic to $E$, it follows that $L\cong M \otimes \alpha $. Hence 
we deduce that the above morphism $\Nm^{-1}(\delta\otimes \alpha )\to M(2, \delta )$ is onto the fixed point variety under the 
action of tensoring by $\alpha$ on $M(2, \delta )$. 
If $\pi _*L \cong \pi _*L'$, then by applying 
$\pi ^*$ to it, we see that $L'$ is isomorphic either to $L$ or $\iota^*L$. In other words, the above map descends to an isomorphism of the quotient of $\Nm^{-1}(\delta\otimes\alpha )$ by the Galois involution onto the $\alpha $-fixed 
subvariety of $M(2, \delta)$. Since the fibres of $\Nm$ are interchanged by the Galois involution when $\delta$ 
is of odd degree, this fixed point variety is isomorphic to a coset of the Prym variety. When $\delta $ is of 
even degree, the $\alpha $-fixed variety has two connected components, each isomorphic to the quotient of the 
Prym variety by the involution $L\to L^{-1}$, that is to say to the Kummer variety of Prym. We collect these facts 
in the following.

\begin{theorem}\label{fixed-points-M}
Let $\alpha $ be a non-trivial element of $J_2(X)$. It acts on $M(2, \delta )$ by tensor 
product: $\iota(\alpha)(E):=E\otimes \alpha$. The fixed point variety $F_\alpha(\delta)$ is isomorphic to the 
Prym variety of the covering $\pi:X_{\alpha }\to X$ given by 
$\alpha $ if $d = \deg \delta$ is odd, and is isomorphic to the union of two irreducible components, each 
isomorphic to the Kummer variety of the Prym variety, if $d$ is even.
\end{theorem}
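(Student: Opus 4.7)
The plan is to assemble the two propositions above into a description of $F_\alpha(\delta)$ as a quotient of $\Nm^{-1}(\delta\otimes\alpha)$, and then compute that quotient in each parity of $d$. First I would combine the two propositions: the first shows that every $E\in M(2,\delta)$ with $E\otimes\alpha\cong E$ is polystable (with the explicit description in the non-stable case), while the second shows every $\pi_*L$ is such a fixed bundle. The subsequent discussion, via the module structure over $\cO\oplus\alpha$ obtained from a normalised isomorphism $f\colon E\to E\otimes\alpha$ satisfying $(f\otimes\Id_\alpha)\circ f=\Id_E$, shows that every stable fixed bundle arises as $\pi_*L$, and the non-stable case is handled by the decomposition $L\oplus(L\otimes\alpha)=\pi_*(\pi^*L)$. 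Using $\det(\pi_*L)=\Nm(L)\otimes\alpha$ and $\alpha^2\cong\cO$ pins down $\Nm(L)=\delta\otimes\alpha$, so $\pi_*$ defines a surjective morphism $\Nm^{-1}(\delta\otimes\alpha)\to F_\alpha(\delta)$.

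The second step is to analyse its fibres. Since $\pi^*\pi_*L=L\oplus\iota^*L$, the line bundle on $X_\alpha$ is recovered up to the Galois involution, so $\pi_*L\cong \pi_*L'$ if and only if $L'\cong L$ or $L'\cong\iota^*L$. Hence $\pi_*$ descends to an injective morphism
\[
\Nm^{-1}(\delta\otimes\alpha)/\langle \iota^*\rangle \;\hookrightarrow\; F_\alpha(\delta),
\]
which by the previous paragraph is a bijection, and in fact an isomorphism of varieties by the functorial construction of $\pi_*$ on families.

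The last step is to compute this quotient using the structure of $\Nm^{-1}(\delta\otimes\alpha)$ recalled in the preamble of Section~\ref{prym}: the fibre has two connected components, each a torsor over the Prym variety $P_\alpha$, and the Galois involution interchanges the two when $d=\deg(\delta\otimes\alpha)$ is odd and preserves each when $d$ is even. In the odd case the quotient is therefore a single $P_\alpha$-torsor, non-canonically isomorphic to $P_\alpha$. In the even case each component of $\Nm^{-1}(\delta\otimes\alpha)$ descends separately, and the quotient of each is the Kummer variety of $P_\alpha$, yielding the two asserted components of $F_\alpha(\delta)$.

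The main delicate point will be the last paragraph: one must verify that, after choosing a base point in a component of $\Nm^{-1}(\delta\otimes\alpha)$, the action of $\iota^*$ transported to $P_\alpha$ is honestly the inversion $L\mapsto L^{-1}$, so that the quotient is genuinely the Kummer variety and not a twisted form. This reduces to the identity $\iota^*L=L^{-1}$ on $P_\alpha$, which follows from $L\otimes\iota^*L=\Nm(L)=\cO$ for $L\in P_\alpha$; the affine choice of base point then cancels out because inversion in a group and its translates produce the same quotient up to canonical isomorphism.
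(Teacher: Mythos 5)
Your proposal is correct and follows essentially the same route as the paper: surjectivity of $L\mapsto\pi_*L$ onto the fixed locus via the normalised isomorphism $f\colon E\to E\otimes\alpha$ with $(f\otimes\Id_\alpha)\circ f=\Id_E$ giving a module structure over $\cO\oplus\alpha$, identification of the fibres using $\pi^*\pi_*L\cong L\oplus\iota^*L$, and the parity analysis of the Galois action on the two components of $\Nm^{-1}(\delta\otimes\alpha)$. Your closing remark on transporting $\iota^*$ to inversion on $P_\alpha$ is a worthwhile point of care that the paper passes over silently.
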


\begin{remarks}
(1) If $L$ is a line bundle on $X_{\alpha }$ and $E = \pi _*L$, then since $E\otimes \alpha \cong E$, 
$\alpha $ is a line sub-bundle of $\ad(E)$. Indeed, since $E$ is poly-stable, $\alpha $ is actually a direct 
summand. To see this, interpret $\ad(E)$ as $S^2(E)\otimes \det(E)^{-1}$ and 
notice that there is a 
natural surjecion of $S^2(\pi_* L)$ onto $\pi_*(L^2)$.  It follows that $\pi_*(L^2) \det(E)^{-1}$ is contained in 
$\ad(E)$. Thus we see that 
$$\ad (\pi _*L) \cong \alpha \oplus ((\pi _*L^2)\otimes  \alpha \otimes \Nm(L^{-1})).$$ 
(2) As we have seen above, in the case $\delta $ is trivial, the fixed point variety intersects the 
non-stable locus, namely the Kummer variety of the Jacobian at bundles of the form 
$\xi \oplus (\xi \otimes \alpha )$, where $\xi $ is a line bundle with $\xi ^2 \cong \alpha $. Clearly, $\xi $ 
and $\xi \otimes \alpha $ give the same bundle. Thus the intersection of the two copies of the Prym Kummer variety 
(corresponding to any non-trivial $\alpha \in J_2$) with the Jacobian--Kummer variety is an orbit of smooth points, 
under the action of $J_2$. This geometric fact can be stated in the context of principally polarised abelian 
varieties and is conjectured to be characteristic of Jacobians. Analytically expressed, this is the Schottky 
equation.
\end{remarks}

%%%%%%%%%%%%%%%%%%%%%%%%%%%%%%%%%%%%%%%%%%%%%%%%%%%%%%%%%%%%%%
\section{Fixed Points of $\iota(\alpha )^{\pm }$ when $d$ is odd.}\label{fix-odd}
%%%%%%%%%%%%%%%%%%%%%%%%%%%%%%%%%%%%%%%%%%%%%%%%%%%%%%%%%%%%%%

If $(E, \varphi )$ is a polystable Higgs bundle fixed under either of the involutions $\iota(\alpha )^{\pm }$, we
observe that $E$ is isomorphic to $E\otimes \alpha $. This implies that $E$ is itself polystable. 
Hence if $d$ is odd, we have only to consider the action of $\alpha$ on $M =
M(2, \delta )$, given by $E\mapsto E\otimes \alpha$,  and look at
its action on the cotangent bundle. Let $F_\alpha$ be the fixed point variety
in $M$ under the  action of $\alpha$ (see Theorem \ref{fixed-points-M}), we 
have the exact sequence 
$$0\to N(F_\alpha, M)^* \to T^*(M)|_{F_\alpha} \to T^*(F_\alpha) \to 0,$$ 
where $N(F_\alpha,M)$ is the normal bundle of $F_\alpha$ in $M$. This 
sequence splits canonically since $\alpha $ acts on the 
restriction of the tangent bundle of $M$ to $F_\alpha$ and splits it into eigen-bundles corresponding to the eigen-values 
$\pm 1$. Clearly the subbundle corresponding to the eigen-value $+1$ (resp. -1) is $T(F_\alpha)$ (resp. $N(F_\alpha, M)$).  
Since $E\otimes \alpha \cong E$ and $d$ is odd, $E$ is stable, and we have
the following.

\begin{theorem}
If $\deg \delta$ is odd the fixed point subvariety $\cF_\alpha^+$ (resp.  $\cF_\alpha^-$)
of the action of 
$\iota(\alpha )^+$ (resp. $\iota(\alpha )^{-}$) on $\calH(2,\delta)$ is the 
cotangent bundle $T^*(F_\alpha)$ of $F_\alpha$ 
(resp. the conormal bundle  $N(F_\alpha,M)^*$ of $F_\alpha$). 
\end{theorem}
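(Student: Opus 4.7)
First I would confirm that the entire fixed locus $\cF_\alpha^{\pm}$ sits inside the cotangent bundle $T^*M \subset \calH(2,\delta)$. Since $d$ is odd, every polystable Higgs bundle in $\calH(2,\delta)$ is in fact stable. If $(E,\varphi)$ is fixed by either $\iota(\alpha)^{\pm}$, then $E\otimes\alpha\cong E$; by the Proposition of Section \ref{prym} the only alternative to $E$ being stable is $E\cong L\oplus(L\otimes\alpha)$ with $L^2\cong\alpha$, which forces $\deg L = d/2$ and is ruled out by $d$ odd. (If instead $E$ were not semistable, its unique destabilising line subbundle $L$ would satisfy $L\cong L\otimes\alpha$, again a contradiction.) Hence $E$ is stable and $(E,\varphi)$ lies in the open subset $T^*M\subset\calH(2,\delta)$, so we may restrict the analysis of the fixed locus to this cotangent bundle.

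Next I would linearise the involution on the cotangent fibres over $F_\alpha$. For $E\in F_\alpha$, stability forces an isomorphism $f\colon E\to E\otimes\alpha$, unique up to a sign once normalised by $(f\otimes 1_\alpha)\circ f = \Id_E$. The derivative of the automorphism $E\mapsto E\otimes\alpha$ of $M$ at $E$ is then conjugation by $f$ on $T_EM = H^1(X,\ad E)$, an involution whose $(+1)$- and $(-1)$-eigenspaces must coincide, by the standard local linearisation of a finite-order algebraic automorphism at a smooth fixed point, with $T_EF_\alpha$ and $N_E(F_\alpha,M)$ respectively (this is precisely the decomposition asserted in the sentence preceding the theorem). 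Dualising gives
\[
T^*_EM \;=\; H^0(X,\ad E\otimes K)\;=\; T^*_EF_\alpha \,\oplus\, N_E(F_\alpha,M)^{*},
\]
with conjugation by $f$ acting as $+1$ on the first summand and $-1$ on the second.

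The last step is to read off the fixed-point conditions. A Higgs field $\varphi\in H^0(X,\ad E\otimes K)$ gives a point $(E,\varphi)$ fixed by $\iota(\alpha)^+$ precisely when $f$ intertwines $\varphi$ with itself, that is when $\varphi$ lies in the $(+1)$-eigenspace $T^*_EF_\alpha$; and $\iota(\alpha)^-$ fixes $(E,\varphi)$ precisely when $f$ conjugates $\varphi$ into $-\varphi$, that is when $\varphi\in N_E(F_\alpha,M)^{*}$. Letting $E$ vary over $F_\alpha$ then gives the identifications $\cF_\alpha^+ = T^*F_\alpha$ and $\cF_\alpha^- = N(F_\alpha,M)^{*}$ as subvarieties of $\calH(2,\delta)$.

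The main technical subtlety I expect is upgrading this pointwise analysis to a genuine algebraic splitting of the bundle $T^*M|_{F_\alpha}$: the isomorphism $f$ is only canonical up to a sign, so one has to note that conjugation by $\pm f$ produces the same involution on $\ad E$ and hence on $H^0(X,\ad E\otimes K)$, so the eigen-decomposition globalises without a sign obstruction. One also has to check closedness of the two components inside $\calH(2,\delta)$, which follows from $F_\alpha\subset M$ being closed together with the first paragraph, excluding any limits outside $T^*M$.
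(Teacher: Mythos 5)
Your proposal is correct and follows essentially the same route as the paper: reduce to the cotangent bundle of $M$ using that $E\otimes\alpha\cong E$ forces $E$ polystable, hence stable since $\deg\delta$ is odd, and then split $T^*(M)|_{F_\alpha}$ into the $\pm 1$ eigenbundles of the induced action of $\alpha$, identifying them with $T^*(F_\alpha)$ and $N(F_\alpha,M)^*$. The paper states this splitting more tersely; your extra care about the sign ambiguity of the normalised isomorphism $f$ and the explicit linearisation as conjugation on $H^1(X,\ad E)$ are consistent elaborations of the same argument.
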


%%%%%%%%%%%%%%%%%%%%%%%%%%%%%%%%%%%%%%%%%%%%%%%%%%%%%%%%%%%%%%%%%
\section{Fixed points of $\iota(\alpha )^\pm$ when $d$ is even.}\label{fix-even}
%%%%%%%%%%%%%%%%%%%%%%%%%%%%%%%%%%%%%%%%%%%%%%%%%%%%%%%%%%%%%%%%%

We may assume that the determinant is trivial in this case. 
If $(E, \varphi )$ is fixed by either of the 
involutions $\iota(\alpha )^{\pm }$, with $E$ stable, 
the above discussion is still valid so that we have 
\begin{itemize}
\item[(i)] The sub-variety of fixed points of $\iota(\alpha )^+$ is $T^*(F_\alpha^{stable})$.
\item[(ii)] The sub-variety of fixed points of $\iota(\alpha )^{-}$ is $N^*(F_\alpha^{stable}, M)$. 
\end{itemize}

Assume then that $(E,\varphi)$ is a fixed point of $\iota(\alpha )^{\pm }$, 
where $E$ is  polystable of the form $L \oplus L^{-1}$. 
We have $L^{-1}\cong L \otimes \alpha $ 
and $\varphi $ is of the form 

\begin{equation}\label{higgs-field}
\varphi= 
\begin{pmatrix}
  \omega & \beta \\
  \gamma  & -\omega
\end{pmatrix},
\end{equation}

\noindent
with $\beta,\gamma \in H^0(K\otimes \alpha )$ and $\omega \in H^0(K)$. 
Since the summands of $E$ are 
distinct, any isomorphism $f: E \otimes \alpha \to E$ has to be of the form 

$$
\begin{pmatrix}
0 & \lambda \\
-\lambda^{-1}  & 0
\end{pmatrix},
$$

\noindent
with $\lambda \in \C^*$. Also, $f$ takes $\varphi $ to $\pm \varphi $ 
if and only if

$$
\begin{pmatrix}
  0 & \lambda \\
  -\lambda^{-1}  & 0
\end{pmatrix}
\begin{pmatrix}
  \omega & \beta \\
  \gamma  & -\omega
\end{pmatrix}
\begin{pmatrix}
  0 & -\lambda \\
  \lambda^{-1}  & 0
\end{pmatrix}
=\pm
\begin{pmatrix}
  \omega & \beta \\
  \gamma  & -\omega
\end{pmatrix}.
$$
In other words,

\begin{equation}\label{condition}
\begin{pmatrix}
  -\omega & \lambda^{-2}\gamma \\
  \lambda^{2}\beta  & \omega
\end{pmatrix}
=\pm
\begin{pmatrix}
  \omega & \beta \\
  \gamma  & -\omega
\end{pmatrix}.
\end{equation}

We analyse the cases $\iota(\alpha)^+$ and $\iota(\alpha)^-$ separately.

%%%%%%%%%%%%%%%%%%%%%%%%%%%%%%%%%%%%%%%%%%%%%%%
\subsection{Fixed points of  $\iota(\alpha)^+$}
%%%%%%%%%%%%%%%%%%%%%%%%%%%%%%%%%%%%%%%%%%%%%%%

In the case of $\iota(\alpha )^{+}$, (\ref{condition}) implies 
that $\omega = 0$ and $\lambda ^2 \beta =  \gamma $.
If $\beta $ or $\gamma $ is $0$, the Higgs bundle is $S$-equivalent 
to $(L \oplus (L\otimes \alpha ), 0)$. 
Hence this fixed point variety is isomorphic to the
product of $J/\alpha $ and the 
 space of decomposable tensors in $H^0 (K) \otimes 
H^0(K)$.

\begin{remark}
Since $E$ is of the form $\pi _*(L)$, we conclude that 
%%%%remark above
the tangent space at $E$ to $M$ (assuming that $E$ is stable) is 
$$H^1(\ad (E)) = H^1(\alpha ) \oplus H^1(\pi_* (L^2)\otimes \alpha ).$$
It is clear that the first summand here is the tangent space to the Prym variety while the second is the space 
normal to Prym in $M$.
\end{remark}

%%%%%%%%%%%%%%%%%%%%%%%%%%%%%%%%%%%%%%%%%%%%%%%
\subsection{Fixed points of  $\iota(\alpha)^-$}
%%%%%%%%%%%%%%%%%%%%%%%%%%%%%%%%%%%%%%%%%%%%%%%
It is clear that 
\begin{equation}
\begin{pmatrix}
  \omega & \beta \\
  \gamma  & -\omega
\end{pmatrix}.
\end{equation}
is taken to its negative under the action of 
\begin{equation}
\begin{pmatrix}
  0 & \lambda \\
 - \lambda^{-1}  & 0
\end{pmatrix}
\end{equation}
if and only if $\beta $ and $\gamma $ are multiples of each other, in which case we
may as well assume that $\beta = \gamma $. 
In other words, $E$ belongs to the Prym variety and $\varphi $ belongs to 
$H^0(K) \oplus H^0(K \otimes \alpha )$.

%%%%%%%%%%%%%%%%%%%%%%%%%%%%%%%%%%%%%%%%%%%%%%%%%%%%%%%%%%%%%%%%%%%%
\section{Higgs bundles and representations of the fundamental group}
%%%%%%%%%%%%%%%%%%%%%%%%%%%%%%%%%%%%%%%%%%%%%%%%%%%%%%%%%%%%%%%%%%%%

Let  $G$ be  a reductive Lie group, and let $\pi_1(X)$ 
be the fundamental group of $X$.
A representation $\rho:\pi_1(X)\lra G$ is said to be {\em reductive}
if the composition of $\rho$ with the adjoint representation of $G$ in its 
Lie algebra is completely reducible. When  $G$ is algebraic, this is 
equivalent to the Zariski closure of the image of $\rho$ being a reductive 
group. If $G$ is compact or abelian every representation is reductive.
We thus define the \emph{moduli space of representations} of
$\pi_1(X)$ in $G$ to be the orbit space
$$
\mathcal{R}(G) = \Hom^{red}(\pi_1(X),G) / G 
$$
of reductive representations. With the quotient topology, $\calR(G)$ has the
structure of an algebraic variety.

In this section  we  briefly 
review the relation between rank 1 and  rank 2 Higgs
bundles, and  representations of the fundamental group of the surface  
and its universal central extension in $\C^\ast$, $\U(1)$, $\R^*$,
$\SL(2,\C)$, $\SU(2)$ and
$\SL(2,\R)$. For more details, see 
\cite{hitchin,donaldson,corlette,narasimhan-seshadri}.

%%%%%%%%%%%%%%%%%%%%%%%%%%%%%%%%%%%%%%%%%%%%%%%%%%%%%%
\subsection{Rank 1 Higgs bundles and representations}
%%%%%%%%%%%%%%%%%%%%%%%%%%%%%%%%%%%%%%%%%%%%%%%%%%%%%%
As is well-known $\calR(U(1))$ is in bijective correspondence with the 
space $J$ of isomorphism classes of line 
bundles of degree $0$. Also, if we identify $\Z /2$ with the subgroup 
$\pm 1$ in $U(1)$ we get a bijection of 
$\calR(\Z /2)$ with the set $J_2$ of line bundles of order $2$. 

By Hodge theory one  shows that $\calR(\C^*)$ is in bijection with
$T^*J\cong J\times H^0(X,K)$, the moduli space of Higgs bundles or rank 1 
and degree $0$.  The subvariety of  fixed points of the involution 
$(L,\varphi)\to (L^{-1},\varphi)$ in this moduli space is  $J_2\times H^0(X,K)$
and corresponds to
the subvariety $\calR(\R^*)\subset \calR(\C^\ast)$.

%%%%%%%%%%%%%%%%%%%%%%%%%%%%%%%%%%%%%%%%%%%%%%%%%%%%%%%%%%
\subsection{Rank 2 Higgs bundles and representations}
%%%%%%%%%%%%%%%%%%%%%%%%%%%%%%%%%%%%%%%%%%%%%%%%%%%%%%%%%%

The notion of stability of a Higgs bundle $(E,\varphi)$ emerges as 
a condition for the existence of a Hermitian metric on $E$ satisfying
the Hitchin equations. More precisely, Hitchin \cite{hitchin} proved the 
following.

\begin{theorem} \label{hk} An $\SL(2,\C)$-Higgs bundle $(E,\varphi)$ 
is polystable if and only if $E$ admits a
hermitian metric $h$ satisfying
$$
F_h +[\varphi,\varphi^{*_h}]= 0,
$$
where $F_h$ is the curvature of the Chern connection defined by $h$.
\end{theorem}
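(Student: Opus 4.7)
The proof splits into the implication ``metric $\Rightarrow$ polystability'' (a Chern--Weil computation) and its converse (an existence problem requiring nonlinear analysis).

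For the forward direction, suppose $(E,\varphi)$ carries a metric $h$ with $F_h+[\varphi,\varphi^{*_h}]=0$, and let $F\subset E$ be any proper $\varphi$-invariant holomorphic subbundle, with smooth orthogonal projection $\pi\in\End(E)$. Kobayashi's formula for a smooth subbundle reads
$$
\deg F \;=\; \frac{i}{2\pi}\int_X \tr(\pi F_h) \;-\; \frac{1}{2\pi}\int_X |\bar\partial\pi|_h^{\,2}.
$$
Substituting the Hitchin equation turns the first integrand (up to a factor of $i$) into $-\tr(\pi[\varphi,\varphi^{*_h}])$, and the $\varphi$-invariance of $F$, i.e.\ $(\Id-\pi)\varphi\pi=0$, reduces this pointwise to $-|(\Id-\pi)\varphi^{*_h}\pi|_h^{\,2}\le 0$. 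Since the $\bar\partial\pi$ contribution is also $\le 0$, we obtain $\deg F \le 0 = \mu(E)\cdot\rank F$, proving semistability. Equality forces both non-positive terms to vanish, so $\pi$ is holomorphic and $\varphi$ preserves the orthogonal complement as well, producing a Higgs-bundle direct-sum decomposition; polystability follows.

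For the converse, I would invoke the continuity method of Uhlenbeck--Yau, as adapted to the Higgs setting by Hitchin and later systematised by Simpson. Fix a smooth background metric $h_0$ and parametrise candidate metrics as $h = h_0\,e^{s}$ with $s$ a trace-free, $h_0$-Hermitian endomorphism. Solve the perturbed family
$$
F_{h_t} + [\varphi,\varphi^{*_{h_t}}] + t\,s\cdot\omega = 0,\qquad t\in[0,1],
$$
which admits the trivial solution $s\equiv 0$ at $t=1$; the goal is to deform the solution down to $t=0$. Openness of the solution set comes from the implicit function theorem, since the linearization is an elliptic self-adjoint operator with strictly positive lowest eigenvalue as long as $t>0$. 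Closedness is the crux and reduces to an \emph{a priori} $C^0$ bound on $s_t$: if $\|s_t\|_{C^0}\to\infty$ along a sequence $t_n\to t_\ast$, then rescaling the $s_t$ and extracting an $L^2$ limit yields, via the Uhlenbeck--Yau construction, a weakly holomorphic projection onto a $\varphi$-invariant subsheaf $F\subset E$ that violates the polystability of $(E,\varphi)$, a contradiction. Once the $C^0$ bound is in place, standard elliptic bootstrap promotes convergence to $C^\infty$ and delivers a Hitchin metric at $t=0$.

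Finally, the polystable-but-not-stable case is handled by decomposing $(E,\varphi)=\bigoplus(E_i,\varphi_i)$ into its stable summands of equal slope, applying the stable existence result to each, and taking the block-diagonal metric $h=\bigoplus h_i$, which satisfies the Hitchin equation term by term. The single serious obstacle throughout is the $C^0$ estimate in the existence half; everything else reduces to standard elliptic theory and the polystability hypothesis.
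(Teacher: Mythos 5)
The paper does not actually prove this statement: Theorem \ref{hk} is quoted as background from Hitchin's paper \cite{hitchin}, so there is no internal argument to compare yours against, and your proposal has to be judged on its own. The easy direction of your outline is essentially complete and is the standard argument: the Chern--Weil/Gauss--Codazzi degree formula for the orthogonal projection $\pi$ onto a $\varphi$-invariant subbundle, together with the observation that $(\Id-\pi)\varphi\pi=0$ forces $\tr\bigl(\pi[\varphi,\varphi^{*_h}]\bigr)$ to be pointwise nonpositive, yields $\deg F\le 0$, and the equality analysis (holomorphicity of $\pi$ and $\varphi$-invariance of the orthogonal complement) gives the polystable splitting. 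The reduction of the polystable case of the existence direction to block-diagonal metrics on the stable summands is also correct.

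There is, however, one genuine gap in the existence half as you have set it up: with $h=h_0e^{s}$, the choice $s\equiv 0$ at $t=1$ reduces your perturbed equation to $F_{h_0}+[\varphi,\varphi^{*_{h_0}}]=0$, which is precisely the equation you are trying to solve, so the continuity path has no free starting point. In the Uhlenbeck--Yau scheme the solvability at the initial parameter value is itself a lemma, established by a separate deformation or variational argument, and this step cannot be waved away. The remaining skeleton --- openness via the implicit function theorem, closedness via an a priori $C^0$ bound, and the production of a destabilising weakly holomorphic $\varphi$-invariant subsheaf when that bound fails --- is the right one. Note finally that the proof in the source the paper cites proceeds differently: Hitchin minimises the Yang--Mills--Higgs functional over the complex gauge orbit and uses Uhlenbeck weak compactness, identifying failure of convergence in the orbit with a destabilising subbundle, while Donaldson and Simpson use the heat flow on the Donaldson functional. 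Your continuity-method route is a third legitimate path once the starting point of the path is repaired.
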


Combining Theorem \ref{hk} with a theorem of Donaldson \cite{donaldson} about 
the  existence of a harmonic metric on a flat $\SL(2,\C)$-bundle with 
reductive holonomy representation, one has 
the following non-abelian generalisation of the Hodge correspondence explained
above for the rank 1 case \cite{hitchin}.

\begin{theorem}\label{correspondence}
The varieties $\calH(2,\cO)$ and $\calR(\SL(2,\C))$ are homeomorphic.
\end{theorem}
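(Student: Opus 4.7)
The plan is to construct the homeomorphism by going through solutions of the Hitchin self-duality equations, using Theorem \ref{hk} in one direction and Donaldson's existence theorem for harmonic metrics in the other.

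First, starting from a polystable Higgs bundle $(E,\varphi)\in\calH(2,\cO)$, I would invoke Theorem \ref{hk} to obtain a hermitian metric $h$ (compatible with the trivialisation of $\det E$) solving $F_h+[\varphi,\varphi^{*_h}]=0$. On the underlying $C^\infty$ bundle, form the connection $D=d_h+\varphi+\varphi^{*_h}$, where $d_h$ is the Chern connection of $h$. A direct computation of $F_D$ splits into $(2,0)$, $(1,1)$ and $(0,2)$ pieces: the $(0,2)$ part vanishes because $\bar\partial^2=0$, the $(2,0)$ part vanishes because $\varphi$ is holomorphic, and the $(1,1)$ part is precisely the Hitchin equation. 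Since $\varphi$ is trace-free and $\det E$ is trivial, $D$ is an $\SL(2,\C)$-connection, and its monodromy provides a representation $\rho\colon\pi_1(X)\to\SL(2,\C)$. Reductivity of $\rho$ follows because the existence of the harmonic metric $h$ forces the Zariski closure of the image to be reductive (Corlette's argument).

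In the opposite direction, given a reductive representation $\rho$, form the flat $\SL(2,\C)$-bundle $(V,D)$ on $X$. Donaldson's theorem \cite{donaldson} supplies a $\rho$-equivariant harmonic metric, i.e.\ a harmonic section of the associated $\SL(2,\C)/\SU(2)$-bundle. Decomposing $D=A+\Psi$ into its skew-hermitian and hermitian parts with respect to this metric, the operator $A^{0,1}$ defines a holomorphic structure $E$ on $V$, while $\Psi^{1,0}$ is a holomorphic, trace-free $K$-valued endomorphism $\varphi$. The harmonicity of the metric translates, via flatness of $D$, into the Hitchin equation for $(E,h,\varphi)$, so by Theorem \ref{hk} the pair $(E,\varphi)$ is polystable and gives a point of $\calH(2,\cO)$.

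The two assignments descend to the respective moduli spaces because the metric in Theorem \ref{hk} is unique up to the stabiliser of $(E,\varphi)$, and similarly for the harmonic metric of $\rho$ up to the stabiliser of $\rho$; moreover they are mutually inverse, since in both cases the same ``harmonic'' metric $h$ plays the two roles simultaneously, so starting from $(E,\varphi)$, building $\rho$, and then recovering a Higgs bundle via the harmonic metric returns $(E,\varphi)$ up to isomorphism, and vice versa. Finally, continuity in both directions follows from continuous dependence of solutions of Hitchin's equations and of harmonic metrics on their initial data in the $L^2$ gauge-theoretic topology, in which both moduli spaces are realised as quotients; this upgrades the bijection to a homeomorphism.

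The main obstacle is entirely analytic and should be quoted rather than reproduced: namely Donaldson's existence and uniqueness theorem for equivariant harmonic maps to $\SL(2,\C)/\SU(2)$ associated to reductive representations, together with the parallel existence/uniqueness statement in Theorem \ref{hk}. Once these two non-linear PDE inputs are granted, the remaining verifications (flatness of $D$, polystability from harmonicity, mutual inverseness, and continuity) are formal consequences of the Kähler identities and standard elliptic regularity.
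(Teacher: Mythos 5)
Your proposal is correct and follows exactly the route the paper takes: the paper does not give a detailed proof but simply combines Theorem \ref{hk} with Donaldson's harmonic metric theorem for reductive flat $\SL(2,\C)$-bundles, citing \cite{hitchin} and \cite{donaldson}, which is precisely the two-directional construction you spell out. The only cosmetic slip is your justification for the vanishing of the $(2,0)$ and $(0,2)$ parts of $F_D$, which on a curve is automatic for dimension reasons; the substantive point is that the $(1,1)$ part reduces to $F_h+[\varphi,\varphi^{*_h}]$ once $\bar\partial_E\varphi=0$, as you say.
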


The representation $\rho$ corresponding to a polystable Higgs bundle
is the holonomy representation of the flat $\SL(2,C)$-connection given
by
\begin{equation}\label{higgs-connection}
D=\dbar_E+\partial_h+\varphi+\varphi^{*_h},
\end{equation}
where $h$ is the solution to Hitchin equations and $\dbar_E+\partial_h$ is
the $\SU(2)$-connection  defined by $\dbar_E$, the Dolbeault
operator of $E$ and $h$.

\begin{remark}
Notice that the complex structures of $\calH(2,\cO)$ and $\calR(\SL(2,\C))$ are
different. The complex structure of $\calH(2,\cO)$ is induced by the complex
structure of $X$, while that of $\calR(\SL(2,\C))$ is induced by the complex
structure of $\SL(2,\C)$.
\end{remark}

Higgs bundles with fixed determinant $\delta$ of odd degree can also
be interpreted in terms of representations. For this we need to consider
the universal central extension of $\pi_1(X)$ 
(see \cite{atiyah-bott,hitchin}). Recall that
the  fundamental group, $\pi_1(X)$, of $X$ is a finitely generated group
generated by $2g$ generators, say $A_{1},B_{1}, \ldots, A_{g},B_{g}$,
subject to the single relation $\prod_{i=1}^{g}[A_{i},B_{i}] = 1$. 
It has a universal central extension

\begin{equation}\label{eq:gamma}
0\lra\Z\lra\Gamma\lra\pi_1(X)\lra 1 \
\end{equation}

\noindent generated by the same generators as $\pi_1(X)$, together with a
central element $J$ subject to the relation $\prod_{i=1}^{g}[A_{i},B_{i}] = J$.

Representations of $\Gamma$ into $\SL(2,\C)$ are of two types
depending on whether the central element $1\in \Z\subset \Gamma$ goes
to $+I$ or $-I$ in $\SL(2,\C)$. In the first case the representation
is simply obtained from a homomorphism from $\Gamma/\Z=\pi_1(X)$ into
$\SL(2,\C)$.  The $+I$ case corresponds to Higgs bundles with trivial
determinant as we have seen.  The $-I$ case corresponds to Higgs bundles 
with odd degree determinant. Namely, let
\begin{equation}\label{rgamma}
\mathcal{R}^\pm(\Gamma,\SL(2,\C)) =
\{\rho\in \Hom^{red}(\Gamma,\SL(2,\C)) / \SL(2,\C) \;\;:
\;\;\rho(J)=\pm I\}.
\end{equation}
Here a reductive representation of $\Gamma$ is defined as at the beginning of 
the section, replacing $\pi_1(X)$ by $\Gamma$.
Note that $\mathcal{R}^+(\Gamma,\SL(2,\C))=\calR(\SL(2,\C))$.
We then  have the following \cite{hitchin}.

\begin{theorem}\label{correspondence}
Let $\delta$ be a line bundle over $X$. Then there are homeomorphisms

(i) $\calH(2,\delta)\cong \calR^+(\Gamma,\SL(2,\C))$ if $\deg \delta$ is even,

(ii) $\calH(2,\delta)\cong \calR^-(\Gamma,\SL(2,\C))$ if $\deg \delta$ is odd.

\end{theorem}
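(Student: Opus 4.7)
The plan is to reduce part (i) to the already-stated $\SL(2,\C)$-case of Theorem \ref{correspondence} by tensoring with a square root, and to prove part (ii) by adapting Hitchin's equations to the projectively Hermite-Einstein setting, with monodromy then landing in $\calR^-(\Gamma,\SL(2,\C))$ via the universal central extension (\ref{eq:gamma}).

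For part (i), choose a line bundle $\zeta$ on $X$ with $\zeta^2\cong\delta$; such a $\zeta$ exists because $\deg\delta$ is even. The map $(E,\varphi)\mapsto (E\otimes\zeta^{-1},\varphi)$ is a biholomorphism $\calH(2,\delta)\to\calH(2,\cO)$ preserving (semi/poly)stability, since twisting by a line bundle does not affect slopes of $\varphi$-invariant sub-bundles. Composing with the already-stated homeomorphism $\calH(2,\cO)\cong\calR(\SL(2,\C))=\calR^+(\Gamma,\SL(2,\C))$ finishes this case.

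For part (ii), fix a Kähler metric on $X$ with volume form $\omega$ and a Hermite-Einstein metric $h_\delta$ on $\delta$. Given a polystable Higgs bundle $(E,\varphi)$ of determinant $\delta$, the appropriate generalization of Theorem \ref{hk} due to Hitchin asserts the existence of a Hermitian metric $h$ on $E$ inducing $h_\delta$ on $\det E$ and solving the projective Hitchin equation
$$F_h+[\varphi,\varphi^{*_h}]=-\pi i\mu(E)\,\omega\cdot\Id_E.$$
Form the connection $D=\dbar_E+\partial_h+\varphi+\varphi^{*_h}$ as in (\ref{higgs-connection}); its curvature is a scalar multiple of $\Id_E$, so $D$ is projectively flat. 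Pulling back to the universal cover $\widetilde X$, which is simply connected, one obtains a genuine flat $\SL(2,\C)$-connection, hence a representation of the fundamental group of the punctured surface $X\setminus\{p_0\}$ into $\SL(2,\C)$. The monodromy around $p_0$ equals $\prod_i[a_i,b_i]$ with $a_i,b_i$ the monodromies of the standard generators, and a Chern-Weil computation on the scalar curvature of $D$ shows this monodromy equals $(-1)^d\,\Id$; for $d$ odd it is $-\Id$. Using the presentation of $\Gamma$ in (\ref{eq:gamma}), the assignments $A_i\mapsto a_i$, $B_i\mapsto b_i$, $J\mapsto -\Id$ define a representation $\rho\in\calR^-(\Gamma,\SL(2,\C))$, which is reductive because $(E,\varphi)$ is polystable.

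Conversely, starting from $\rho\in\calR^-(\Gamma,\SL(2,\C))$, one obtains a projectively flat $\SL(2,\C)$-bundle on $X$ whose determinant has odd degree $d$; Donaldson–Corlette's theorem (or Hitchin's equivariant version of it) produces an equivariant harmonic metric, and decomposing the flat connection into its $(0,1)$-part and the Higgs field recovers a polystable $(E,\varphi)$ with $\det E\cong\delta$. The two constructions are mutually inverse and continuous, giving the claimed homeomorphism. The main obstacle is the careful Chern-Weil bookkeeping that forces $\prod_i[a_i,b_i]=(-1)^d\Id$, together with ensuring that the projective Hitchin equation can be solved with the prescribed induced metric on $\det E$; everything else is formal once the trivial-determinant correspondence and the Donaldson-Corlette existence theorem are granted.
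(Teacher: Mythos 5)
The paper offers no proof of this theorem---it is quoted directly from Hitchin \cite{hitchin}---and your sketch is essentially the standard argument from that source: reduction to trivial determinant via a square root $\zeta$ of $\delta$ in the even case (the same device the paper invokes when it says the spaces for different $\delta$ are isomorphic after tensoring by a line bundle), and the projectively flat connection / universal central extension mechanism with $J\mapsto(-1)^d I$ in the odd case. One small caution on your phrasing: pulling back to the universal cover does not literally flatten a projectively flat connection (the central curvature form pulls back nontrivially); the correct statement is that this scalar $2$-form is exact there, or on $X\setminus\{p_0\}$, so the connection can be corrected by a scalar $1$-form at the cost of equivariance holding only up to the central character---which is precisely what yields a representation of $\Gamma$ rather than of $\pi_1(X)$, with the Chern--Weil computation forcing $\rho(J)=(-1)^d I$ as you state.
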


%%%%%%%%%%%%%%%%%%%%%%%%%%%%%%%%%%%%%%%%%%%%%%%%%%%%%%%%%%%%%%%%%%%%%%%%%%%%
\subsection{Fixed points of $\iota(\cO)^{-}$ and representations of $\Gamma$}
%%%%%%%%%%%%%%%%%%%%%%%%%%%%%%%%%%%%%%%%%%%%%%%%%%%%%%%%%%%%%%%%%%%%%%%%%%%%%

For any reductive subgroup $G\subset \SL(2,\C)$ containing $-I$ we consider 
\begin{equation}\label{rgamma}
\mathcal{R}^\pm(\Gamma,G) =
\{\rho\in \Hom^{red}(\Gamma,G) / G \;\;:
\;\;\rho(J)=\pm I\}.
\end{equation}

In particular we have  $\mathcal{R}^\pm(\Gamma,\SU(2))$ and 
$\mathcal{R}^\pm(\Gamma,\SL(2,\R))$. Note that, since $\SU(2)$ is compact,
every representation of $\Gamma$ in $\SU(2)$ is reductive.
We can define the subvarieties  $\mathcal{R}_k^\pm(\Gamma,\SL(2,\R))$ 
of $\mathcal{R}^\pm(\Gamma,\SL(2,\R))$ given by the representations 
of $\Gamma$ in $\SL(2,\R)$ with Euler class $k$. By this, we mean that
the corresponding flat $\PSL(2,\R)$ bundle has Euler class $k$.  
If the $\PSL(2,\R)$ bundle can be lift to an $\SL(2,\R)$ bundle then
$k=2d$, otherwise $k=2d-1$. The 
Milnor inequality \cite{milnor} says that the Euler
class $k$ of any flat $\PSL(2,\R)$ bundle satisfies
$$
|k|\leq 2g-2,
$$
where $g$ is the genus of $X$. 
 
Hitchin proves the following \cite{hitchin}.

\begin{theorem} 
Consider the involution $\iota(\cO)^-$ of $\calH(2,\delta)$. We have 
the following.

(i) The fixed point subvariety of $\iota(\cO)^-$ of points $(E,\varphi)$
with $\varphi=0$ is homeomorphic to the image of   
$\mathcal{R}^\pm(\Gamma,\SU(2))$ in
$\mathcal{R}^\pm(\Gamma,\SL(2,\C))$, where we have $\calR^+$ if the
degree of $\delta$ is even and $\calR^-$ if the degree of $\delta$ is odd. 

(ii) The fixed point subvariety of $\iota(\cO)^-$ of points $(E,\varphi)$
with $\varphi\neq 0$ is homeomorphic to the image of   
$\mathcal{R}^\pm(\Gamma,\SL(2,\R))$ in
$\mathcal{R}^\pm(\Gamma,\SL(2,\C))$, where we have $\calR^+$ if the
degree of $\xi$ is even and $\calR^-$ if the degree of $\xi$ is odd. 

(iii)
More precisely,  the subvariety of triples $\calH_a\subset \calH(2,\delta)$
defined in Section \ref{triples} is homeomorphic
to the image of $\calR_{2a}^+(\Gamma,\SL(2,\R))$ in 
$\mathcal{R}^+(\Gamma,\SL(2,\C))$ if the degree of
$\delta$ is even or to  to the image of $\calR_{2a-1}^-(\Gamma,\SL(2,\R))$ in 
$\mathcal{R}^-(\Gamma,\SL(2,\C))$ if the degree of
$\delta$ is odd. 

\end{theorem}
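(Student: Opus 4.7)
The plan is to transport the involution $\iota(\cO)^-$ through the non-abelian Hodge correspondence of Theorem \ref{correspondence} and identify each of its two types of fixed loci with the image of representations of $\Gamma$ into the appropriate real form of $\SL(2,\C)$. Begin with part (i): if $(E,\varphi)$ is polystable with $\varphi = 0$, Hitchin's equation reduces to $F_h = 0$, so the flat connection (\ref{higgs-connection}) is the Chern connection of a flat Hermitian bundle, and its holonomy lies in $\SU(2)$, with $\rho(J) = \pm I$ matching the parity of $d = \deg\delta$. Conversely, any reductive $\SU(2)$-representation of $\Gamma$ yields a flat unitary bundle, and uniqueness of the Hitchin metric forces the Higgs field of the corresponding Higgs bundle to vanish.

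For part (ii), assume $(E,\varphi)$ is fixed by $\iota(\cO)^-$ with $\varphi \neq 0$. Section \ref{triples} supplies an automorphism $f$ with $f^2 = -\Id_E$ and $f\varphi f^{-1} = -\varphi$, an eigenbundle splitting $E = L \oplus (L^{-1}\otimes \delta)$, and an off-diagonal Higgs field $\varphi = \left(\begin{smallmatrix} 0 & \beta \\ \gamma & 0 \end{smallmatrix}\right)$. This is precisely the datum of an $\SL(2,\R)$-Higgs bundle: writing the Cartan decomposition $\liesl(2,\R) = \mathfrak{k}\oplus \liep$ with maximal compact $K = \SO(2)$, one has $K^\C \cong \C^\ast$ and an isotropy representation on $\liep^\C$ splitting into weights $\pm 2$, so that sections of the associated bundle (suitably twisted by $\delta$) produce exactly the pair $(\beta,\gamma)$. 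The non-abelian Hodge theorem for the real reductive group $\SL(2,\R)$, adapted to $\Gamma$ when $d$ is odd, then yields a reductive $\rho:\Gamma \to \SL(2,\R)$ with $\rho(J)=\pm I$ whose composition with $\SL(2,\R)\hookrightarrow \SL(2,\C)$ recovers the original $\SL(2,\C)$-representation. Conversely, every $\SL(2,\R)$-representation carries a natural $f$ anticommuting with its Hitchin Higgs field.

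For part (iii), the Euler class of the associated flat $\PSL(2,\R)$-bundle equals the degree of its $\SO(2,\C)\cong \C^\ast$-reduction. In the twisted rank $2$ presentation $E = L\oplus (L^{-1}\otimes\delta)$, this works out to $\deg(L^2 \otimes \delta^{-1}) = 2a - d$, yielding $k = 2a$ when $d = 0$ and $k = 2a-1$ when $d = 1$. The bound $0 < 2a - d \leq 2g-2$ from Section \ref{triples} coincides with the Milnor-Wood inequality \cite{milnor}. Hence $\calH_a$ is homeomorphic to the image of $\calR_{2a}^+(\Gamma,\SL(2,\R))$ or of $\calR_{2a-1}^-(\Gamma,\SL(2,\R))$ in $\calR^{\pm}(\Gamma,\SL(2,\C))$ according to the parity of $d$.

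The main obstacle is the reverse inclusion in part (ii): that the fixed locus of $\iota(\cO)^-$ on the $\SL(2,\C)$-representation variety is no larger than the image of the $\SL(2,\R)$-representations. This uses uniqueness of the Hitchin metric: since $f^\ast h$ also solves the Hitchin equations for $(E,-\varphi)\cong (E,\varphi)$, one may arrange $f^\ast h = h$, which makes $f$ unitary with respect to $h$; combined with $f^2 = -\Id$, this produces a Cartan involution of the holonomy whose fixed subalgebra is a copy of $\liesl(2,\R)$, placing the holonomy inside a genuine $\SL(2,\R)$-subgroup rather than merely in a conjugate of one.
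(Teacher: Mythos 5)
Your proposal is correct and follows essentially the same route as the paper: both transport $\iota(\cO)^-$ through the non-abelian Hodge correspondence, identify the $\varphi=0$ locus with unitary (Narasimhan--Seshadri) representations, realise the $\varphi\neq 0$ fixed points as $\SL(2,\R)$-Higgs bundles with Euler class $2a$ or $2a-1$, and use uniqueness of the harmonic metric for the converse. The only cosmetic difference is that the paper organises the argument around the inner equivalence of the conjugations $\tau$ and $\sigma$ of $\liesl(2,\C)$ (so that one antiholomorphic involution of $\calR^\pm(\Gamma,\SL(2,\C))$ accounts for both real forms), whereas you treat the two loci separately and make explicit the unitarity of the automorphism $f$ with $f^2=-\Id$.
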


\begin{proof}

The conjugations with respect to both real forms,
$\SU(2)$ and $\SL(2,\R)$, of  $\SL(2,\C)$ are inner equivalent and
hence they induce the same antiholomorphic involution
of the moduli space $\calR^\pm(\Gamma,\SL(2,C))$, where we recall that the complex structure of
this variety is the one naturally induced by the complex structure of 
$\SL(2,\C)$.
To be more precise, at the level of Lie algebras, the conjugation
with respect to  the real form $\liesu(2)$ is given by the
$\C$-antilinear involution
\begin{equation}\nonumber
  \begin{aligned}
  \tau:\liesl(2,\C)   & \to \liesl(2,\C) \\
    A  &\mapsto -\overline{A}^t,
  \end{aligned}
\end{equation}
while the conjugation  with respect to  the real form $\liesl(2,\R)$ is
given by the  $\C$-antilinear involution
\begin{equation}\nonumber
   \begin{aligned}
  \sigma:\liesl(2,\C)   & \to \liesl(2,\C) \\
    A  &\mapsto \overline{A}.
  \end{aligned}
\end{equation}
Now,
$$
\sigma(A)=J\tau(A)J^{-1}
$$
for $J\in \liesl(2,\R)$ given by
$$
J =
\begin{pmatrix}
  0 & 1 \\
  -1  & 0
\end{pmatrix}.
$$
This is simply because for every $A\in\liesl(2,\R)$, one has that
\begin{equation}\label{n=2}
JA=-A^tJ.
\end{equation}

Under the  correspondence $\calH(2,\delta)\cong\calR^\pm(\Gamma,\SL(2,\C))$, 
the antiholomorphic involution
of $\calR^\pm(\Gamma,\SL(2,\C))$ defined by $\tau$ and $\sigma$
becomes the holomorphic involution $\iota(\cO)^-$ of $\calH(2,\delta)$
\begin{equation}\label{involution}
(E,\varphi)\mapsto (E,-\varphi),
\end{equation}
where we recall that  the  complex  structure of
$\calH(2,\delta)$ is that induced by the complex structure of $X$.
This follows basically from the fact that the 
$\SL(2,\C)$-connection $D$ corresponding to  $(\dbar_E,\varphi)$
under Theorem \ref{correspondence}
is given by (\ref{higgs-connection})
and hence
$$
\tau(D)=\ast_{h}(\dbar_E)+\dbar_E +(\varphi)^{\ast_h}-\varphi,
$$
from which we deduce that $\tau(D)$ is in correspondence with
$(E,-\varphi)$.
Notice also that $\tau(D)\cong \sigma(D)$.

The proof of (i) follows now from the fact that if $\varphi=0$  in
(\ref{higgs-connection}) the connection $D$ is and $\SU(2)$
connection. Note that this reduces to the  
Theorem of Narasimhan and  Seshadri for $\SU(2)$ \cite{narasimhan-seshadri}.

To proof of (ii) and (iii) one easily checks that the connection $D$  defined 
by  a Higgs bundle in $\calH_a(\delta)$ is $\sigma$-invariant and hence
defines an $\SL(2,\R)$-connection.  Now, the Euler class $k$ of the
$\PSL(2,\R)$ bundle is $k=2d$ if $E=L\oplus L^{-1}$, or  
$k=2d-1$ $E=L\oplus L^{-1}\delta$, where  $d=\deg L$. 
\end{proof}

%%%%%%%%%%%%%%%%%%%%%%%%%%%%%%%%%%%%%%%%%%%%%%%%%%%
\section{Fixed points of $\iota(\alpha)^\pm$ with
$\alpha\neq \cO$ and representations of $\Gamma$}
%%%%%%%%%%%%%%%%%%%%%%%%%%%%%%%%%%%%%%%%%%%%%%%%%%%

Consider the normalizer $N\SO(2)$ of $\SO(2)$ in $\SU(2)$. This is
generated by $\SO(2)$ and 
$J=\begin{pmatrix}
  0 & i \\
  i  & 0
\end{pmatrix}$.
The group generated by $J$ is isomorphic to $\Z/4$
and fits in the exact sequence 
\begin{equation}\label{z4}
0\lra \Z/2\lra \Z/4\lra  \Z/2\lra 1,
\end{equation}
where the subgroup $\Z/2\subset \Z/4$ is $\{\pm I\}$.
We thus have  an exact sequence

\begin{equation}\label{normalizer}
1\lra \SO(2)\lra N\SO(2)\lra  \Z/2\lra 1.
\end{equation}

The normalizer $N\SO(2,\C)$ of $\SO(2,\C)$ in $\SL(2,\C)$
fits also in  an extension
\begin{equation}\label{c-normalizer}
1\lra \SO(2,\C)\lra N\SO(2,\C)\lra  \Z/2\lra 1,
\end{equation}
which is, of course, the complexification of (\ref{normalizer}).

Similarly, we also have that   
$N\SL(2,\R)$, the normalizer of $\SL(2,\R)$ in $\SL(2,\C)$, 
is given by

\begin{equation}\label{normalizer=sl2r}
1\lra \SL(2,\R)\lra N\SL(2,\R)\lra  \Z/2\lra 1.
\end{equation}

Note that $N\SO(2)$ is a maximal compact subgroup of $N\SL(2,\R)$.

Given a representation  $\rho:\Gamma \lra N\SO(2)$ there is a
topological invariant $\alpha\in H^1(X,\Z/2)$, which is given  
by  the map 
$$
 H^1(X,N\SO(2))\lra H^1(X,\Z/2)
$$ 
induced by (\ref{normalizer}).
Let 
$$
\calR_{\alpha}^\pm(\Gamma,N\SO(2)):=\{\rho\in \calR^\pm(\Gamma,N\SO(2))\;\;:\;\;
\mbox{with invariant} \;\;\alpha\in H^1(X,\Z/2)\}.
$$
Similarly, we have this $\alpha$-invariant for representations of $\Gamma$ in
$N\SO(2,\C)$ and in $N\SL(2,\R)$, and we can define  
$\calR_\alpha(\Gamma,N\SO(2,\C))$ and $\calR_{\alpha}^\pm(\Gamma,N\SL(2,\R))$.

\begin{theorem}
Let $\alpha\in J_2(X)=H^1(X,\Z/2)$. Then we have the following.

(i) The subvariety $F_\alpha$ of fixed points of the 
involution $\iota(\alpha)$ in  $M(\delta)$ defined by $E\mapsto E\otimes \alpha$
is homeomorphic to the image of  
$\calR^\pm_\alpha(\Gamma,N\SO(2))$ in $\calR^\pm(\Gamma,\SU(2))$, where we have $\calR^+$ if the
degree of $\delta$ is even and $\calR^-$ if the degree of $\delta$ is odd. 

(ii) The subvariety $\cF_\alpha^+$  of fixed
points of the involution $\iota(\alpha)^+$ of $\calH(\delta)$ is homeomorphic 
to the image of $\calR^\pm_\alpha(\Gamma,N\SO(2,\C))$ in
$\calR^\pm(\Gamma,\SL(2,\C))$, where we have $\calR^+$ if the
degree of $\delta$ is even and $\calR^-$ if the degree of $\delta$ is odd. 

(iii) The subvariety $\cF_\alpha^-$ of fixed
points of the involution $\iota(\alpha)^-$ of $\calH(\xi)$ is homeomorphic 
to the image
of $\calR^\pm_\alpha(\Gamma,N\SL(2,\R))$ in
$\calR^\pm(\Gamma,\SL(2,\C))$, where we have $\calR^+$ if the
degree of $\delta$ is even and $\calR^-$ if the degree of $\delta$ is odd. 

\end{theorem}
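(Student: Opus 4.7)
The plan is to transport each involution across the non-abelian Hodge correspondence (Theorem~\ref{correspondence}) into the representation variety, identify the induced action on representations, and match the fixed locus with representations into the appropriate normalizer subgroup carrying the topological invariant $\alpha$. This extends the strategy Hitchin used in the preceding theorem for the case $\alpha=\cO$.

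First I would translate the involutions to the representation side. The class $\alpha\in J_2(X)=\Hom(\pi_1(X),\Z/2)$ determines a character $\chi_\alpha:\Gamma\to\{\pm I\}\subset\SL(2,\C)$, and by the rank-one Hodge correspondence tensoring by $\alpha$ becomes multiplication by $\chi_\alpha$. Hence $\iota(\alpha)$ and $\iota(\alpha)^+$ correspond to the holomorphic involution $\rho\mapsto\chi_\alpha\cdot\rho$, while $\iota(\alpha)^-=\iota(\alpha)^+\circ\iota(\cO)^-$ corresponds, using the identification of $\iota(\cO)^-$ with $\sigma$ in the preceding theorem, to the antiholomorphic involution $\rho\mapsto\chi_\alpha\cdot(\sigma\circ\rho)$.

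For parts (i) and (ii) I would analyse the fixed-point condition $g\rho(\gamma)g^{-1}=\chi_\alpha(\gamma)\rho(\gamma)$ for some $g$ in the ambient group. Restricting to the index-two subgroup $H=\ker\chi_\alpha$, which corresponds to $\pi_1(X_\alpha)$, one sees $g$ centralises $\rho|_H$, while on the non-trivial coset $g$ anticommutes with $\rho$. For irreducible $\rho$, $g^2$ is central and the anticommutation forces $g^2=-I$, so $g$ has eigenvalues $\pm i$; its centraliser in $\SU(2)$ (resp.~in $\SL(2,\C)$) is a conjugate $T$ of $\SO(2)$ (resp.~of $\SO(2,\C)$), and the identity $\rho(\gamma)g\rho(\gamma)^{-1}=-g=g^{-1}$ for $\gamma\notin H$ places $\rho(\gamma)$ in the non-trivial coset of the normalizer of $T$. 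Thus $\rho$ factors through $N\SO(2)$ (resp.~$N\SO(2,\C)$) with the quotient character to $\Z/2$ from (\ref{normalizer}) (resp.~(\ref{c-normalizer})) equal to $\chi_\alpha$, i.e.\ topological invariant $\alpha$. Conversely, given such a $\rho$, conjugation by any $g\in T$ with $g^2=-I$ realises the twist: it fixes $T$ pointwise and negates the other coset, since $gng^{-1}=-n$ follows from $g^{-1}=-g$ together with $ngn^{-1}=g^{-1}$. For part (iii), the defining property $N\SL(2,\R)=\{X\in\SL(2,\C):\sigma(X)=\pm X\}$ gives the forward direction at once: a representation into $N\SL(2,\R)$ with sign character $\chi_\alpha$ satisfies $\chi_\alpha\cdot\sigma(\rho)=\rho$ pointwise and is therefore fixed.

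The principal obstacle is the converse direction in (iii): starting from a fixed point of the antiholomorphic involution $\rho\mapsto\chi_\alpha\cdot\sigma(\rho)$, one must conjugate $\rho$ into $N\SL(2,\R)$ rather than merely exhibit a $g$ realising the equivalence. This is a Galois-descent problem: the defining identity $g\rho g^{-1}=\chi_\alpha\cdot\sigma(\rho)$ packages a $1$-cocycle of the Galois group $\la\sigma\ra$ with values in the centraliser of $\rho$, and trivialising it requires producing an appropriate square root of $g\cdot\sigma(g)$ therein. For irreducible $\rho$ the centraliser reduces to $\{\pm I\}$ and the argument is elementary, but one must carefully verify that the resulting conjugate distributes correctly between the $\SL(2,\R)$-coset and the anti-real coset of $N\SL(2,\R)$ in agreement with $\chi_\alpha$. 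The polystable non-stable strata require separate treatment, matching the explicit descriptions from Sections~\ref{fix-odd} and~\ref{fix-even} with reducible representations in the normalizer, the Prym description of Theorem~\ref{fixed-points-M} providing the geometric bridge.
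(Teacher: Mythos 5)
Your route is genuinely different from the paper's. The paper never analyses the twisted-conjugation equation on $\calR^\pm(\Gamma,\SL(2,\C))$ directly: it passes to the \'etale double cover $X_\alpha$, introduces the index-two subgroup $\Gamma_\alpha\subset\Gamma$, establishes the identification $\Hom_\alpha(\Gamma,NG)\cong\Hom(\Gamma_\alpha,G)^{\Z/2}$ (using a splitting of $\Aut(G)\to\Out(G)$ when $G$ is non-abelian), and then matches $\calR^\pm_\alpha(\Gamma,NG)\cong\calR^\pm(\Gamma_\alpha,G)^{\Z/2}$ against the fact that the fixed loci of Sections \ref{prym}--\ref{fix-even} are push-forwards of $\Z/2$-invariant objects from $X_\alpha$. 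Your direct argument for (i) and (ii) --- $g$ centralises $\rho|_{\Gamma_\alpha}$ and anticommutes with $\rho$ on the other coset, whence $g^2=-I$ and $\rho$ lands in the normaliser of the torus $Z(g)$ with sign character $\chi_\alpha$ --- is correct for irreducible $\rho$ and makes the appearance of $N\SO(2)$ and $N\SO(2,\C)$ more transparent than the descent formalism; what it buys less of is generality, since the covering-space formulation is the one that carries over to higher rank in \cite{garcia-prada-ramanan}. The bookkeeping you both defer (matching the reducible/polystable strata against the explicit loci of Sections \ref{prym}--\ref{fix-even}, and the passage from representations to their \emph{image} in $\calR^\pm(\Gamma,\SL(2,\C))$) is deferred by the paper as well.

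The one place where your sketch misdiagnoses the difficulty is the converse in (iii). The cocycle class $g\sigma(g)\in\{\pm I\}$ need not be trivial, and when $g\sigma(g)=-I$ it \emph{cannot} be trivialised --- not in $\SL(2,\C)$ and not in $\PGL(2,\C)$ either, since this is the nonzero class in $H^1(\Z/2,\PGL(2,\C))$ corresponding to the quaternionic form. This is not a technicality one argues away by "producing a square root": in the untwisted case $\alpha=\cO$ it is precisely what produces the $\SU(2)$ component of the fixed locus of $\iota(\cO)^-$ alongside the $\SL(2,\R)$ component. So for (iii) you must show that when $\alpha\neq\cO$ the quaternionic fixed points either do not occur or still lie in the image of $\calR^\pm_\alpha(\Gamma,N\SL(2,\R))$. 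Both in fact hold: if $\rho|_{\Gamma_\alpha}$ is irreducible, conjugate it into $\SU(2)$; then Schur forces $g=\pm J_0$ with $J_0=\left(\begin{smallmatrix}0&1\\-1&0\end{smallmatrix}\right)$, and the coset condition $J_0\rho(\gamma)J_0^{-1}=-\sigma(\rho(\gamma))$ yields $\rho(\gamma)\rho(\gamma)^{\ast}=-I$, impossible for a positive definite Hermitian matrix; if $\rho|_{\Gamma_\alpha}$ is reducible while $\rho$ is irreducible, Clifford theory puts $\rho$ in the normaliser of a torus, and the fixed-point condition forces the restricted characters to be either real or unitary, placing $\rho$ (up to conjugation) in $N\SL(2,\R)$ or in $N\SO(2)$, which is contained in $N\SL(2,\R)$ as its maximal compact. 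You should add this dichotomy explicitly; as written, "the argument is elementary" glosses over the only point at which the statement could fail.
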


\begin{proof}
The element $\alpha\in J_2(X)=H^1(X,\Z/2)$ defines a $\Z/2$ 
\'etale covering  $\pi: X_\alpha\lra X$. The strategy of the 
proof is to lift to $X_\alpha$ and apply a $\Z/2$-invariant
version of the correspondence between Higgs bundles on $X_\alpha$ and
representations of $\Gamma_\alpha$ --- the universal central
extension of $\pi_1(X_\alpha)$. We have a sequence

\begin{equation}\label{gamma-alpha}
1\lra \Gamma_\alpha\lra \Gamma \lra  \Z/2\lra 1.
\end{equation}
since $\Gamma_\alpha$ is the kernel of the homomorphism $\alpha:\Gamma\to \Z/2$ 
defined by $\alpha$.
 
For convenience, let $G$ be any of the  subgroups $\SO(2)\subset \SU(2)$, 
$\SO(2,\C)\subset \SL(2,\C)$
and $\SL(2,\R)\subset \SL(2,\C)$, and let $NG$ be its normalizer in the corresponding  group. We then have an extension

\begin{equation}\label{ng}
1\lra G\lra NG\lra  \Z/2\lra 1.
\end{equation}

Let $\Hom_\alpha(\Gamma,NG)$ be the subset of 
$\Hom(\Gamma,NG)$ consisting of representations of $\rho: \Gamma\to NG$ such 
that  the following diagram is commutative

\begin{equation}\label{commu}
\begin{matrix}
1 & \longrightarrow & \Gamma_\alpha&\longrightarrow &\Gamma &
\stackrel{\alpha}{\longrightarrow} &
\Z/2 &\longrightarrow & 1\\
&& \Big\downarrow && ~\Big\downarrow\rho && \Vert\\
1 & \longrightarrow & G &\longrightarrow & NG &
\longrightarrow &
\Z/2 &\longrightarrow & 1
\end{matrix}
\end{equation}

The group $NG$ is  a disconnected group with
 $\Z/2$ as the group of connected components and $G$ as the
connected component containing the identity. If $G$ is abelian 
($G=\SO(2),\SO(2,\C)$), $\Z/2$ acts on 
$G$ and, since $\Z/2$ acts  on $X_\alpha$ (as the Galois
group) and hence on  $\Gamma_\alpha$, there is thus  an action of $\Z/2$ on
$\Hom(\Gamma_\alpha,G)$. 

A straightforward computation shows that 

\begin{equation}\label{reps-inv-reps}
\Hom_\alpha(\Gamma, NG)\cong \Hom(\Gamma_\alpha,G)^{\Z/2}.
\end{equation}

If $G$ is not abelian (which is the case for 
$G=\SL(2,\R)$), the extension (\ref{ng}) still defines a homomorphism
$\Z/2 \to \Out(G)=\Aut(G)/\Int(G)$. We can then take a splitting of the sequence 

\begin{equation}\label{aut-g}
1\lra \Int(G)\lra \Aut(G) \lra  \Out(G)\lra 1, 
\end{equation}
which always exists \cite{de-siebenthal}. This defines an action  
on $\Hom(\Gamma_\alpha,G)$. However only the action on  
$\Hom(\Gamma_\alpha,G)/G$ is independent of the splitting. In particular, as consequence of (\ref{reps-inv-reps}), we have homeomorphisms  
$$
\calR_\alpha^\pm(\Gamma,NG)\cong \calR^\pm(\Gamma_\alpha,G)^{\Z/2}.
$$

The result follows now from the usual correspondences  between
representations of $\Gamma_\alpha$ and vector bundles or Higgs bundles on 
$X_\alpha$,
combined with the fact that the fixed point subvarieties $F_\alpha$,
$\cF^\pm_\alpha$ described in Sections \ref{prym},  \ref{fix-odd} and 
\ref{fix-even} 
are push-forwards to $X$ of objects  on $X_\alpha$  that 
satisfy the
$\Z/2$-invariance condition (see \cite{garcia-prada-ramanan} for more details).

\end{proof}

\providecommand{\bysame}{\leavevmode\hbox to3em{\hrulefill}\thinspace}


\begin{thebibliography}{99}



\bibitem{atiyah-bott}
M.~F. Atiyah and R.~Bott, The {Y}ang-{M}ills equations over {R}iemann
  surfaces, {\em  Philos. Trans. Roy. Soc. London Ser. A} \textbf{308} (1982),
  523--615.

%\bibitem{cartan}
%\'E. Cartan, Les groupes r\'eels simples, finis et continus,
%\textsl{Ann. \'Ec. Norm. Sup.} \textbf{31} (1914), 263--355.

\bibitem{corlette}
K.~Corlette, Flat ${G}$-bundles with canonical metrics, \textsl{J. Differential
  Geom.} \textbf{28} (1988), 361--382.

\bibitem{donaldson}
S.~K. Donaldson, Twisted harmonic maps and the self-duality equations,
  \textsl{Proc. London Math. Soc.} (3) \textbf{55} (1987), 127--131.

\bibitem{garcia-prada-ramanan}
O. Garc\'{\i}a-Prada and S. Ramanan, Involutions and higher order automorphisms
of Higgs bundle moduli spaces, arXiv:1605.05143. 

\bibitem{goldman:1988}
W. Goldman, Topological components of spaces of representations,{\em Invent.
  Math.} \textbf{93} (1988), 557--607.

\bibitem{hitchin}
N.~J. Hitchin, The self-duality equations on a {R}iemann surface, {\em Proc.
  London Math. Soc. (3)} \textbf{55} (1987), 59--126.

\bibitem{milnor}
J. Milnor, On the existence of a connection with curvature
zero \textsl{Comment. Math. Helv.}
\textbf{32} (1958) 215-223.

\bibitem{mumford}
D. Mumford,  Prym Varieties I, {\em  Contributions to Analysis}, New York,
 Academic Press, 1974.

\bibitem{narasimhan-seshadri}
M.~S. Narasimhan and C.~S. Seshadri, Stable and unitary vector bundles on
  a compact {R}iemann surface, \textsl{Ann. Math.} \textbf{82} (1965), 540--567.

\bibitem{narasimhan-ramanan}
M.~S. Narasimhan and S. Ramanan, Generalised Prym varieties as 
fixed points, {\em Journal of the Indian Math. Soc.} \textbf{39} (1975), 1--19.

\bibitem{nitsure}
     N. Nitsure,
     Moduli spaces of semistable pairs on a curve,
     {\em Proc. London Math. Soc.} {\bf 62} (1991), 275--300.

\bibitem{de-siebenthal}
J. de Siebenthal,
Sur les groupes de Lie compact non-connexes,
\textsl{Commentari Math. Helv.} \textbf{31} (1956) 41--89.


\bibitem{simpson:1988}
C.~T. Simpson, \emph{Constructing variations of {H}odge structure using
  {Y}ang-{M}ills theory and applications to uniformization}, J. Amer. Math.
  Soc. \textbf{1} (1988), 867--918.

\bibitem{simpson:1992}
\bysame, \emph{Higgs bundles and local systems}, Inst. Hautes {\'E}tudes Sci.
  Publ. Math. \textbf{75} (1992), 5--95.
\end{thebibliography}
\end{document}